\newtheorem{proposition}{Proposition}
\newtheorem{corollary}{Corollary}
\newtheorem{example}{Example}
\newtheorem{lemma}{Lemma}
\theoremstyle{definition}
\newtheorem{remark}{Remark}
\author{Chance Sanford}
\title{Two Neumann Series Expansions for the Sine and Cosine Integrals}
\begin{document}

\maketitle

\begin{abstract}
In this work, series expansions in terms of Bessel functions of the first kind are given for the sine and cosine integrals.  These representations differ from many of the known Neumann-type series expansions for the sine and cosine integrals, which contain non-integer order or quadratic Bessel function terms.  In addition, using the theory of Euler sums we are able to obtain some closed form evaluations of integrals involving Bessel functions and the sine and cosine integrals.
\end{abstract}

\section*{Inroduction and Statement of Results}

\subsection*{Introduction}
The objective of this note is to present a new series expansion involving Bessel functions for both the Sine and Cosine Integrals.  In contrast to the majority of the known expansions for the Sine and Cosine Integrals involving Bessel functions, the series presented below contain linear terms of integer-order Bessel functions of the First Kind. \\

These expansions are termed \emph{Neumann Series} \cite{watson,wilkins}; that is, series of the form

\begin{equation}
f(x) = \sum_{n=0}^{\infty}a_{n}J_{2n+v+1}(x) \label{intro_eq}
\end{equation}

where $v \in \{-1,0,1,2,\dots\}$ and $J_v(z)$ is the Bessel function of the First Kind defined by 

\begin{equation*}
J_v(z) := \sum_{n=0}^{\infty}\frac{(-1)^n}{n!\Gamma(v+n+1)}\left(\frac{z}{2}\right)^{2n+v}
\end{equation*}

For ease of reference we will refer to the coefficients $a_n$ in \eqref{intro_eq} as the \textit{Neumann coefficients} of $f(x)$.

Previously, other authors have considered similar series expansions involving Bessel functions for the Sine and Cosine integrals.  In particular, Nielsen in \cite{nielsen1} (or \cite{nielsen2}) provides the following expansion for the variant Sine integral in terms of the half-integer-order Bessel functions of the First Kind

\begin{equation*}
\mathrm{si}(x) = -\frac{\pi}{2}\sum_{n=0}^{\infty}(-1)^n\frac{J_{\frac{n}{2}}(x)}{\Gamma(\frac{n}{2}+1)}\left(\frac{x}{2}\right)^{\frac{n}{2}}
\end{equation*}

as well as the similar expansion

\begin{equation*}
\mathrm{Si}(x) = \frac{\pi}{2}\sum_{n=0}^{\infty}\frac{J_{n+\frac{1}{2}}(x)}{\Gamma(n+\frac{3}{2})}\left(\frac{x}{2}\right)^{n+\frac{1}{2}}
\end{equation*}

More recently Harris \cite{harris}, searching for the Cosine Integral analogue to the Spherical Bessel function expansion

\begin{equation}
\mathrm{Si}(2x) = 2x\sum_{n=0}^{\infty}\left[j_n(x)\right]^2 \label{eq:I1}
\end{equation}

stated in \cite{abramowitz/stegun}, showed that

\begin{equation}
\mathrm{Ci}(2x) = \gamma + \log(2x) +\sum_{n=0}^{\infty}a_n\left[j_n(x)\right]^2 \label{eq:I2}
\end{equation}

where $a_n$ is defined by $a_0 = 0$ and

\begin{equation*}
a_n = -(2n+1)\left[1-(-1)^n + \sum_{j=1}^{n}\frac{1}{j}\right] \quad (n>0)
\end{equation*}

Lastly, Luke in \cite{luke} recorded over a dozen series expansions involving Bessel functions of the First Kind for $\mathrm{Si}(x)$ and $\mathrm{Ci}(x)$, including the following Cosine Integral expansion

\begin{equation*}
\mathrm{Ci}(x) = \gamma + \log(x) - \frac{1}{2}\sum_{n=0}^{\infty}\frac{\left[\psi\left(n+\tfrac{1}{2}\right)-\psi\left(\tfrac{1}{2}\right)\right]}{n!}\left(\frac{x}{2}\right)^n J_n(x)
\end{equation*}

where $\psi(x)$ is the Digamma function.\\

Before proceeding on to our main results, we quickly review the notation and definitions used throughout the paper for the Sine and Cosine Integrals.\\

\subsection*{Notation and Definitions}

To begin, we have the Sine Integral defined by

\begin{align*}
\mathrm{Si}{(x)} &:= \int_{0}^{x}{\frac{\sin{t}}{t}\text{d}t} \\
&= \sum_{n=1}^{\infty}\frac{(-1)^{n-1} x^{2n-1}}{(2n-1)(2n-1)!}
\end{align*}

while the variant Sine Integral $\text{si}(x)$ (found for instance in Gradshteyn and Ryzhik \cite{GR}) is related to $\text{Si}(x)$ by the identity

$$ \text{si}{(x)} := -\int_{x}^{\infty}{\frac{\sin{t}}{t}\text{d}t} = \text{Si}{(x)} - \frac{\pi}{2} $$

The Cosine Integral function on the other hand is defined by

\begin{align*}
\text{Ci}{(x)} &:= -\int_{x}^{\infty}{\frac{\cos{t}}{t}\text{d}t} \\
&:= \gamma + \log{x} + \int_{0}^{x}{\frac{\cos{t}-1}{t}\text{d}t} \\
&= \gamma + \log{x} + \sum_{n=1}^{\infty}\frac{(-1)^{n-1} x^{2n}}{2n(2n)!}
\end{align*}

where $\gamma \approx 0.577215\dots$ is Euler's Constant.\\

In addition, before moving on to the main results of this paper, we present some notation for the  harmonic numbers, Riemann zeta function, as well as Euler sums.

The harmonic numbers and their alternating analogue are defined respectively as

\begin{equation*}
H_n = \sum_{k=1}^{n}\frac{1}{k}, \qquad A_n = \sum_{k=1}^{n}\frac{(-1)^{k-1}}{k}
\end{equation*}

while the Riemann zeta function and the Dirichlet eta or alternating zeta function are defined respectively as

\begin{equation*}
\zeta(s) = \sum_{k=1}^{\infty}\frac{1}{k^s}, \qquad \eta(s) = \sum_{k=1}^{\infty}\frac{(-1)^{k-1}}{k^s}
\end{equation*}

Finally, in the second half of this work we will encounter the related Euler sums. These functions are, in their most basic configuration, series of the form

\begin{equation*}
\sum_{n=1}^{\infty}(\pm 1)^n\frac{H^m_n}{n^k}, \qquad \sum_{n=1}^{\infty}(\pm 1)^n\frac{A^m_n}{n^k}
\end{equation*}

where $H_n$ and $A_n$ are the harmonic and alternating harmonic numbers described above.

There now exists an enormous body of literature involving Euler sums and their generalizations.  For more information into this fascinating subject the website \cite{hoffman} provides as complete of a bibliography as one could wish for.

\subsection*{Main Results}

We now present our main results, beginning with the following expansion for the Sine Integral. 

\begin{proposition}
For $a \geq 0$,
\begin{equation}
\mathrm{Si}(a) = 2\sum_{n=0}^{\infty} J_{2n+1}(a)\alpha_n \label{eq:P1}
\end{equation}

where the coefficients $\alpha_n$ are given by

\begin{equation*}
\alpha_n = 2\sum_{k=1}^{n}\frac{(-1)^{k-1}}{2k-1} + \frac{(-1)^n}{2n+1}
\end{equation*}

\end{proposition}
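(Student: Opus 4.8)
The plan is to prove the identity by expanding both sides as power series in $a$ and matching coefficients of $a^{2m-1}$ for each $m \geq 1$. The left-hand side has the known Taylor series $\mathrm{Si}(a) = \sum_{m=1}^{\infty} \frac{(-1)^{m-1} a^{2m-1}}{(2m-1)(2m-1)!}$, so the real work is to expand the right-hand side and show its coefficient of $a^{2m-1}$ equals $\frac{(-1)^{m-1}}{(2m-1)(2m-1)!}$.

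First I would substitute the defining series for $J_{2n+1}(a)$ into the right-hand side of \eqref{eq:P1}. Writing
\begin{equation*}
J_{2n+1}(a) = \sum_{k=0}^{\infty} \frac{(-1)^k}{k!\,(2n+k+1)!}\left(\frac{a}{2}\right)^{2n+2k+1},
\end{equation*}
the double sum $2\sum_{n=0}^{\infty}\alpha_n J_{2n+1}(a)$ becomes a double series in which the total power of $a$ is $2n+2k+1$. To extract the coefficient of $a^{2m-1}$ I would set $n+k = m-1$, i.e. $k = m-1-n$, and collect the contributions from $n = 0, 1, \dots, m-1$. This reduces the problem to proving, for each $m\geq 1$, a finite identity of the form
\begin{equation*}
\sum_{n=0}^{m-1} \alpha_n \cdot \frac{(-1)^{m-1-n}}{(m-1-n)!\,(m+n)!} = \frac{(-1)^{m-1}}{2^{2m-1}\,(2m-1)(2m-1)!},
\end{equation*}
after accounting for the factors of $2$ from the leading coefficient and the $(a/2)$ scaling.

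Substituting the explicit $\alpha_n = 2\sum_{k=1}^n \frac{(-1)^{k-1}}{2k-1} + \frac{(-1)^n}{2n+1}$ splits this into two finite sums. The second (the $\frac{(-1)^n}{2n+1}$ piece) should telescope or collapse via a standard binomial-type evaluation, while the first (the partial-sum-of-reciprocals piece) is where I expect the main obstacle to lie: it produces a weighted sum of the odd harmonic-type partial sums $\sum_{k=1}^n \frac{(-1)^{k-1}}{2k-1}$ against binomial coefficients, and these must conspire to leave only the single target term. I would attack this either by swapping the order of the two nested finite sums (summing over $n$ first for fixed inner index $k$) so the inner sum becomes a clean binomial alternating sum, or by recognizing the whole finite identity as a consequence of the known relation $\frac{1}{(2m-1)\binom{2m-1}{m}} = \text{(a beta-integral / binomial identity)}$; an induction on $m$ is the natural fallback if the direct rearrangement proves stubborn.

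Finally, I would address convergence and the interchange of summation that justifies the coefficient-matching. Since $|J_{2n+1}(a)| \leq \frac{1}{(2n+1)!}(a/2)^{2n+1}$ and the $\alpha_n$ grow at most logarithmically, the double series converges absolutely and uniformly on compact sets, so the rearrangement into a single power series in $a$ is legitimate; this secures the identity for all $a \geq 0$ once the coefficients match.
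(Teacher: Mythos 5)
Your reduction of the proposition to a family of finite identities is sound: the coefficient extraction is legitimate (the double series converges absolutely on compacts; in fact $\alpha_n$ is bounded, tending to $\pi/2$, which is even better than the logarithmic growth you claim), and the target identity is true --- though note a bookkeeping slip: the power of two belongs in the numerator of the right-hand side as $2^{2m-2}$, not in the denominator as $2^{2m-1}$. The correct statement is
\begin{equation*}
\sum_{n=0}^{m-1}\alpha_n\,\frac{(-1)^{m-1-n}}{(m-1-n)!\,(m+n)!} \;=\; \frac{(-1)^{m-1}\,2^{2m-2}}{(2m-1)\,(2m-1)!}\,,
\end{equation*}
which checks out for $m=1,2$. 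The genuine gap is that this identity is never proven: you name three possible strategies (swap the nested sums, find a binomial or beta-integral identity, induct on $m$) without executing any of them, and in your approach this identity \emph{is} the proposition --- all of its content has been pushed into that one unproven step. What you have is a correct reduction plus a conjecture. For what it is worth, the swap strategy does go through: interchanging the finite sums and using $\sum_{j=0}^{r}(-1)^j\binom{2m-1}{j}=(-1)^r\binom{2m-2}{r}$ converts the harmonic-type piece into a sum of the form $\sum_{k}\binom{2m-2}{m-1-k}/(2k-1)$, which can then be handled with $1/(2k-1)=\int_0^1 x^{2k-2}\,\mathrm{d}x$; but this computation is not short and remains to be carried out.

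For comparison, the paper avoids the combinatorics entirely: it starts from the expansion $2\sum_{n\geq 0}J_{2n+1}(a)\sin[(2n+1)t]=\sin(a\sin t)$ (Gradshteyn--Ryzhik 8.518.6), multiplies by $\cot t$, integrates over $[0,\pi/2]$, and uses $\mathrm{Si}(a)=\int_0^{\pi/2}\sin(a\sin t)\cot t\,\mathrm{d}t$ together with the elementary evaluation $\int_0^{\pi/2}\sin[(2n+1)t]\cot t\,\mathrm{d}t=\alpha_n$ (its Lemma 1, a telescoping trigonometric computation). The difficulty you face coefficient-by-coefficient is thereby absorbed into a single easy Fourier-type integral. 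If you wish to keep the power-series route, proving the displayed finite identity is the one remaining, and substantial, task.
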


\begin{corollary}
For $n \geq 0$,
\begin{align}
\int_{0}^{\infty}\mathrm{Si}(t)J_{2n+1}(t)\frac{\mathrm{d}t}{t} &= \frac{1}{2n+1}\int_{0}^{\pi/2}\sin[(2n+1)t]\cot(t)\mathrm{d}t \label{eq:C1.0} \\ &= \frac{\alpha_n}{2n+1} \label{eq:C1.1} \\ 
&= \sum_{k=0}^{n}\frac{(n+k)!}{(n-k)!}\frac{2^{2k}(-1)^k}{(2k+1)(2k+1)!} \label{eq:C1.2}
\end{align}

\end{corollary}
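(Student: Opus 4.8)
The plan is to treat the three stated expressions as a single chain: first establish \eqref{eq:C1.0}, then evaluate the resulting trigonometric integral in two different ways to recover \eqref{eq:C1.1} and \eqref{eq:C1.2}. To obtain \eqref{eq:C1.0}, I would begin by writing the Sine Integral as a rescaled single integral, $\mathrm{Si}(t) = \int_0^1 \frac{\sin(ts)}{s}\,\mathrm{d}s$ (the substitution $u = ts$ in the defining integral). Inserting this into the left-hand side and interchanging the order of integration — justified since $J_{2n+1}(t)/t$ is $O(t^{2n})$ near the origin and $O(t^{-3/2})$ at infinity, so the double integral is absolutely convergent — reduces matters to the classical discontinuous integral $\int_0^\infty J_{2n+1}(t)\frac{\sin(st)}{t}\,\mathrm{d}t = \frac{\sin((2n+1)\arcsin s)}{2n+1}$, valid for $0 < s < 1$. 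The outer integral then becomes $\frac{1}{2n+1}\int_0^1 \frac{\sin((2n+1)\arcsin s)}{s}\,\mathrm{d}s$, and the substitution $s = \sin t$ converts $\frac{\mathrm{d}s}{s}$ into $\cot t\,\mathrm{d}t$ with limits $0$ and $\pi/2$, producing exactly \eqref{eq:C1.0}.

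For the second equality \eqref{eq:C1.1} I would evaluate $\int_0^{\pi/2}\sin((2n+1)t)\cot t\,\mathrm{d}t$ using the Dirichlet-kernel identity $\frac{\sin((2n+1)t)}{\sin t} = 1 + 2\sum_{k=1}^n \cos(2kt)$. Multiplying by $\cos t$ and integrating term by term, with $\int_0^{\pi/2}\cos(2kt)\cos t\,\mathrm{d}t = \tfrac12\big[\tfrac{(-1)^k}{2k+1} + \tfrac{(-1)^{k-1}}{2k-1}\big]$, collapses after a shift of index to precisely $\alpha_n = 2\sum_{k=1}^n \frac{(-1)^{k-1}}{2k-1} + \frac{(-1)^n}{2n+1}$, giving $\frac{\alpha_n}{2n+1}$. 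It is worth recording that \eqref{eq:C1.1} also follows directly from the preceding Proposition by substituting the Neumann series \eqref{eq:P1} into the left-hand integral and invoking the Weber--Schafheitlin orthogonality relation $\int_0^\infty \frac{J_{2m+1}(t)J_{2n+1}(t)}{t}\,\mathrm{d}t = \frac{\delta_{mn}}{2(2n+1)}$, which annihilates every term but $m=n$ and leaves $2\alpha_n \cdot \frac{1}{2(2n+1)}$; this provides an independent confirmation consistent with the Proposition.

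For the third equality \eqref{eq:C1.2} I would instead expand $\frac{\sin((2n+1)t)}{\sin t}$ as a polynomial in $\sin^2 t$ through the Chebyshev/hypergeometric representation $\frac{\sin((2n+1)t)}{(2n+1)\sin t} = {}_2F_1\!\left(-n,\,n+1;\,\tfrac32;\,\sin^2 t\right)$. Since $\int_0^{\pi/2}\sin^{2k}t\,\cos t\,\mathrm{d}t = \frac{1}{2k+1}$, term-by-term integration gives $\int_0^{\pi/2}\sin((2n+1)t)\cot t\,\mathrm{d}t = (2n+1)\sum_{k=0}^n \frac{(-n)_k (n+1)_k}{(3/2)_k\, k!\,(2k+1)}$. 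Rewriting the Pochhammer symbols as $(-n)_k = (-1)^k n!/(n-k)!$, $(n+1)_k = (n+k)!/n!$, and $(3/2)_k = (2k+1)!/(4^k k!)$ collapses the summand to $\frac{(n+k)!}{(n-k)!}\frac{2^{2k}(-1)^k}{(2k+1)(2k+1)!}$, so after dividing by $2n+1$ the integral equals the stated finite sum \eqref{eq:C1.2}.

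The step I expect to be the main obstacle is the rigorous justification of \eqref{eq:C1.0}: both the interchange of integration and the appeal to the discontinuous Bessel integral are delicate near the endpoint $s \to 1^-$, where $\arcsin s \to \pi/2$, and establishing the interchange cleanly requires a dominated-convergence or uniform-convergence estimate (the bound $\int_0^\infty |J_{2n+1}(t)\sin(st)|\,t^{-1}\,\mathrm{d}t = O(\sqrt{s})$ as $s\to 0^+$ keeps the outer $s$-integral absolutely convergent). Once \eqref{eq:C1.0} is secured, the remaining two evaluations are essentially routine, hinging only on selecting the correct expansion of $\sin((2n+1)t)/\sin t$ — the Dirichlet-kernel form for \eqref{eq:C1.1} and the terminating hypergeometric form for \eqref{eq:C1.2}.
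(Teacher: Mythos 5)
Your proposal is correct, but it proves the corollary by a genuinely different route than the paper does. The paper treats this as a true corollary of its general machinery: having exhibited $\tfrac{1}{2}\mathrm{Si}(x)$ as a Neumann series with coefficients $\alpha_n$ (Proposition 1 via Lemma 1), it simply invokes Wilkins' theorem (the paper's Lemma 2), whose first formula $a_n = 2(2n+v+1)\int_0^\infty f(t)J_{2n+v+1}(t)\,t^{-1}\mathrm{d}t$ yields \eqref{eq:C1.0} and \eqref{eq:C1.1} at once, and whose second (finite-sum) formula, applied to the Maclaurin coefficients of $\mathrm{Si}$, yields \eqref{eq:C1.2}. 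You instead compute everything directly: \eqref{eq:C1.0} by writing $\mathrm{Si}(t)=\int_0^1 \sin(ts)s^{-1}\mathrm{d}s$, applying Fubini (your $O(\sqrt{s})$ bound is the right estimate --- the cruder pointwise bounds alone would give a non-integrable $1/s$), and using the classical discontinuous integral $\int_0^\infty J_{2n+1}(t)\sin(st)\,t^{-1}\mathrm{d}t = \sin\bigl((2n+1)\arcsin s\bigr)/(2n+1)$; then \eqref{eq:C1.1} by the Dirichlet-kernel computation (which is exactly the paper's Lemma 1, re-derived); and \eqref{eq:C1.2} via the terminating representation $\sin((2n+1)t)/\bigl((2n+1)\sin t\bigr) = {}_2F_1\!\left(-n,n+1;\tfrac32;\sin^2 t\right)$, whose Pochhammer simplification I checked and which does collapse to the stated summand. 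What each approach buys: the paper's proof is two lines but rests entirely on Wilkins' theorem and on Proposition 1, so its rigor is outsourced; your proof is self-contained, independent of Proposition 1 and of Wilkins' hypotheses, and makes the convergence/interchange issues explicit --- at the cost of invoking a table integral and redoing the kernel computation. Your side remark deriving \eqref{eq:C1.1} from the Neumann series \eqref{eq:P1} plus the orthogonality relation $\int_0^\infty J_{2m+1}(t)J_{2n+1}(t)\,t^{-1}\mathrm{d}t = \delta_{mn}/(2(2n+1))$ is essentially the content of Wilkins' lemma, so that part mirrors the paper, but note it is only a formal check unless the term-by-term integration of the series is justified --- which is precisely what Wilkins' theorem provides and your main argument avoids needing.
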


\begin{proposition}
For $a \geq 0$, 
\begin{equation}
\mathrm{Ci}(a) = \gamma + \log(a) - 2\sum_{n=1}^{\infty}J_{2n}(a)\beta_n \label{eq:P2}
\end{equation}

where the coefficients $\beta_n$ are given by

\begin{equation*}
\beta_n = H_n + A_n - \frac{1}{2n} - \frac{(-1)^{n-1}}{2n}
\end{equation*}

\end{proposition}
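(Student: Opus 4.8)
The plan is to prove the identity by differentiating both sides and reducing it to a Neumann expansion of $\tfrac{1-\cos x}{x}$ in \emph{odd}-order Bessel functions, after which a single integration and a limit at the origin finish the argument. Write $F(x)$ for the right-hand side of \eqref{eq:P2}. Since each $J_{2n}(x)=O(x^{2n})$ and $\beta_n=O(\log n)$, the series $\sum_{n\ge1}\beta_n J_{2n}(x)$ converges absolutely and uniformly on compact subsets of $(0,\infty)$ and may be differentiated termwise; using the recurrence $J_{2n}'(x)=\tfrac12\big(J_{2n-1}(x)-J_{2n+1}(x)\big)$ gives
\begin{equation*}
F'(x) = \frac1x - \sum_{n\ge1}\beta_n\big(J_{2n-1}(x)-J_{2n+1}(x)\big).
\end{equation*}
Reindexing the two pieces (replacing $n$ by $m$ and $n+1$ by $m$ respectively) collapses the summation into $\beta_1 J_1(x)+\sum_{m\ge2}(\beta_m-\beta_{m-1})J_{2m-1}(x)$, so that proving $F'=\mathrm{Ci}'$ (i.e. $F'(x)=\cos x/x$) is equivalent to establishing
\begin{equation*}
\frac{1-\cos x}{x} = \sum_{m\ge1} c_m\, J_{2m-1}(x), \qquad c_1=\beta_1,\quad c_m=\beta_m-\beta_{m-1}\ (m\ge2).
\end{equation*}

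Next I would identify the true odd-order coefficients of $\tfrac{1-\cos x}{x}$ independently of the $\beta_n$. Subtracting the classical expansions $\cos x = J_0(x)+2\sum_{k\ge1}(-1)^k J_{2k}(x)$ and $1=J_0(x)+2\sum_{k\ge1}J_{2k}(x)$ gives $1-\cos x = 4\sum_{j\ge0}J_{4j+2}(x)$; dividing by $x$ and applying $\tfrac{2\nu}{x}J_\nu(x)=J_{\nu-1}(x)+J_{\nu+1}(x)$ with $\nu=4j+2$ yields
\begin{equation*}
\frac{1-\cos x}{x} = \sum_{j\ge0}\frac{1}{2j+1}\big(J_{4j+1}(x)+J_{4j+3}(x)\big),
\end{equation*}
in which every odd index $2m-1$ occurs exactly once, with coefficient $c_m=\tfrac1m$ when $m$ is odd and $c_m=\tfrac1{m-1}$ when $m$ is even. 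The proof then reduces to the purely arithmetic check that these match the differences of the stated $\beta_n$. Here it is convenient to note that $\tfrac1{2n}+\tfrac{(-1)^{n-1}}{2n}$ equals $\tfrac1n$ for odd $n$ and $0$ for even $n$, so $\beta_n=(H_n+A_n)-\tfrac1n\,[n\text{ odd}]$; combined with $H_n-H_{n-1}=\tfrac1n$ and $A_n-A_{n-1}=\tfrac{(-1)^{n-1}}{n}$ a short parity computation gives $\beta_m-\beta_{m-1}=\tfrac1m$ for odd $m$ and $\tfrac1{m-1}$ for even $m$, together with $\beta_1=1$, exactly as required.

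Finally I would integrate: $F'(x)=\cos x/x=\mathrm{Ci}'(x)$ on $(0,\infty)$ forces $F(x)=\mathrm{Ci}(x)+C$, and letting $x\to0^+$ pins down $C=0$, since $F(x)-(\gamma+\log x)=-2\sum_{n\ge1}\beta_n J_{2n}(x)\to0$ and likewise $\mathrm{Ci}(x)-(\gamma+\log x)=\int_0^x\tfrac{\cos t-1}{t}\,\mathrm dt\to0$. I expect the main obstacle to be the analytic bookkeeping rather than any single identity: justifying the termwise differentiation, and in particular the legitimacy of dividing the Neumann series for $1-\cos x$ by $x$ and re-expanding each term through the three-term recurrence before collecting the coefficient of a fixed $J_{2m-1}$. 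All of these rearrangements are licensed by the super-exponential decay $J_\nu(x)=O\big((x/2)^\nu/\Gamma(\nu+1)\big)$ in the order $\nu$ for fixed $x$, which I would invoke to guarantee absolute convergence throughout; the closed-form evaluation of the coefficients is then routine.
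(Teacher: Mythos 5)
Your proposal is correct, and it takes a genuinely different route from the paper. The paper never differentiates: it multiplies the expansion $\cos(a\sin t) = J_0(a) + 2\sum_{n\ge1}J_{2n}(a)\cos(2nt)$ (after subtracting its $t=0$ case) by $\cot(t)$, integrates over $[0,\pi/2]$, uses the representation $\int_0^{\pi/2}[1-\cos(a\sin t)]\cot(t)\,\mathrm{d}t = \gamma+\log(a)-\mathrm{Ci}(a)$, and then evaluates the resulting coefficient integrals $\int_0^{\pi/2}[1-\cos(2nt)]\cot(t)\,\mathrm{d}t = \beta_n$ via a finite trigonometric identity (its Lemma 3). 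You instead differentiate the claimed identity, telescope the sum via $J_{2n}' = \tfrac12(J_{2n-1}-J_{2n+1})$, independently expand $\tfrac{1-\cos x}{x} = \sum_{j\ge0}\tfrac{1}{2j+1}\bigl(J_{4j+1}(x)+J_{4j+3}(x)\bigr)$ from the classical even-order expansions of $1$ and $\cos x$ together with the three-term recurrence, match coefficients by the parity computation on $\beta_m-\beta_{m-1}$ (which checks out: $\tfrac1m$ for odd $m$, $\tfrac1{m-1}$ for even $m$, $\beta_1=1$), and fix the constant of integration at the origin. Both arguments ultimately rest on the same two classical Neumann expansions, but your proof stays entirely within series manipulations and requires no interchange of summation with an integral against the singular kernel $\cot(t)$; what it gives up is the integral formula for $\beta_n$ itself, which the paper's route produces as a byproduct and then reuses in Corollary 2 (equation \eqref{eq:C2.0}) and the subsequent applications. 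Also note that since your coefficient identification is a direct term-by-term verification of an already-established expansion, you never need uniqueness of Neumann coefficients, which keeps the argument self-contained.
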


\begin{corollary}

For $n \geq 1$,
\begin{align}
\int_{0}^{\infty}\left[\gamma+\log(t)-\mathrm{Ci}(t)\right]J_{2n}(t)\frac{\mathrm{d}t}{t} &= \frac{1}{2n}\int_{0}^{\pi/2}\left[1-\cos(2nt)\right]\cot(t)\mathrm{d}t \label{eq:C2.0} \\ &= \frac{\beta_n}{2n} \label{eq:C2.1}
\\ &= \sum_{j=0}^{n-1}\frac{(-1)^j 2^{2j}}{(j+1) (2j+2)!}\frac{(n+j)!}{(n-j-1)!} \label{eq:C2.2}
\end{align}

\end{corollary}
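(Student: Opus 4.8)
The plan is to establish the four quantities in the chain in the order \eqref{eq:C2.1}, then \eqref{eq:C2.0}, then \eqref{eq:C2.2}, pivoting through the coefficient $\beta_n$ and the trigonometric integral $J_n^{\mathrm{trig}} := \int_0^{\pi/2}[1-\cos(2nt)]\cot t\,\mathrm dt$.

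\emph{Step 1 (proving \eqref{eq:C2.1}).} First I would substitute the Neumann expansion from Proposition 2 in the form $\gamma+\log t-\mathrm{Ci}(t) = 2\sum_{m=1}^\infty \beta_m J_{2m}(t)$, so that
\[
\int_0^\infty[\gamma+\log t-\mathrm{Ci}(t)]J_{2n}(t)\frac{\mathrm dt}{t} = 2\sum_{m=1}^\infty \beta_m\int_0^\infty \frac{J_{2m}(t)J_{2n}(t)}{t}\,\mathrm dt ,
\]
assuming the interchange of summation and integration is legitimate. The key tool is the Weber--Schafheitlin orthogonality relation, $\int_0^\infty J_\mu(t)J_\nu(t)\,t^{-1}\mathrm dt = \tfrac{2}{\pi}\sin\!\big(\tfrac{(\mu-\nu)\pi}{2}\big)/(\mu^2-\nu^2)$ for $\mu\neq\nu$ and $\tfrac{1}{2\nu}$ for $\mu=\nu$. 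Since $\mu=2m$ and $\nu=2n$ are even integers, $\sin\!\big(\tfrac{(\mu-\nu)\pi}{2}\big)=\sin((m-n)\pi)=0$, so every off-diagonal term vanishes and only $m=n$ survives, contributing $2\beta_n\cdot\tfrac{1}{4n}=\tfrac{\beta_n}{2n}$. This is \eqref{eq:C2.1}.

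\emph{Step 2 (proving \eqref{eq:C2.0}).} Next I would show $J_n^{\mathrm{trig}}=\beta_n$ by a telescoping recursion. Using $\cos(2(n-1)t)-\cos(2nt)=2\sin((2n-1)t)\sin t$ together with $\cot t\,\sin t = \cos t$, the difference $J_n^{\mathrm{trig}}-J_{n-1}^{\mathrm{trig}}$ collapses to $\int_0^{\pi/2}[\sin(2nt)+\sin(2(n-1)t)]\,\mathrm dt = \frac{1-(-1)^n}{2n}+\frac{1+(-1)^n}{2(n-1)}$ for $n\ge 2$. A short computation shows $\beta_n-\beta_{n-1}$ equals the same expression, and the base case $J_1^{\mathrm{trig}}=\int_0^{\pi/2}\sin(2t)\,\mathrm dt = 1 = \beta_1$ matches, so $J_n^{\mathrm{trig}}=\beta_n$ for all $n\ge 1$. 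Dividing by $2n$ and combining with Step 1 yields \eqref{eq:C2.0}.

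\emph{Step 3 (proving \eqref{eq:C2.2}).} Finally I would convert the trigonometric integral into the closed sum. The substitution $x=\sin t$ sends $\cot t\,\mathrm dt\mapsto \mathrm dx/x$, giving $\beta_n = \int_0^1 [1-\cos(2n\arcsin x)]\,x^{-1}\,\mathrm dx$. Here $\cos(2n\arcsin x)$ is a polynomial of degree $2n$ solving the Chebyshev-type ODE $(1-x^2)y''-xy'+4n^2y=0$ with $y(0)=1$, $y'(0)=0$; its even coefficients satisfy $c_{2m+2}=c_{2m}\,\tfrac{4(m^2-n^2)}{(2m+2)(2m+1)}$, which terminates at $m=n$ and solves to $\cos(2n\arcsin x)=\sum_{m=0}^n \tfrac{(-1)^m 4^m\, n\,(n+m-1)!}{(2m)!\,(n-m)!}\,x^{2m}$ (equivalently ${}_2F_1(-n,n;\tfrac12;x^2)$). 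Since the series is finite, term-by-term integration is immediate; dividing by $2n$ and reindexing $m=j+1$ produces exactly \eqref{eq:C2.2}.

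The main obstacle is the interchange of summation and integration in Step 1: the Neumann series for $\gamma+\log t-\mathrm{Ci}(t)$ converges only conditionally and its terms decay like $t^{-3/2}$ at infinity, so the exchange must be justified with care — for instance by combining uniform convergence on compact subsets with a tail estimate, or by an Abel-type regularization — before the Weber--Schafheitlin orthogonality can be applied legitimately.
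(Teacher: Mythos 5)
Your Steps 2 and 3 are correct and self-contained. The telescoping recursion in Step 2 reproves the paper's Lemma 3 (that $\int_0^{\pi/2}[1-\cos(2nt)]\cot t\,\mathrm{d}t=\beta_n$), and the Chebyshev-polynomial computation in Step 3 converts $\beta_n/(2n)$ into the finite sum \eqref{eq:C2.2} by elementary term-by-term integration of a polynomial; this is arguably cleaner than the paper's own route to \eqref{eq:C2.2}, which goes through the partially heuristic method of brackets and then identifies the resulting sum with $\beta_n/(2n)$ via a \textit{Mathematica} evaluation. The genuine gap is Step 1, which is the only link between the Bessel integral $\int_0^\infty[\gamma+\log t-\mathrm{Ci}(t)]J_{2n}(t)\,t^{-1}\mathrm{d}t$ and the rest of your chain. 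The interchange of summation and integration there is not merely "to be handled with care": it cannot be justified by any routine Fubini/dominated-convergence argument, because absolute convergence fails. Indeed $\beta_m\sim 2\log m$, while $\int_0^\infty|J_{2m}(t)J_{2n}(t)|\,t^{-1}\mathrm{d}t$ decays only like $1/m$ (the mass of $J_{2m}$ lives on $t\gtrsim 2m$, where both factors have envelope $\sqrt{2/(\pi t)}$, giving $\int_{2m}^\infty t^{-2}\,\mathrm{d}t\asymp 1/m$), so the sum of the absolute values of the integrals diverges. Uniform convergence on compacta does not help for the same reason --- the relevant mass escapes every compact set as $m\to\infty$ --- and "Abel-type regularization" is the name of a proof, not a proof.

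This is exactly the difficulty the paper identifies and deliberately sidesteps: Wilkins' integral representation for Neumann coefficients (the paper's Lemma 2) holds for $v>-1$, whereas an expansion in $J_{2n}$ is the case $v=-1$, for which Wilkins' Theorem 1.2 gives the conclusion only under additional criteria that would have to be verified for $f(t)=\gamma+\log t-\mathrm{Ci}(t)$; the paper adopts the method of brackets precisely to avoid doing so. As it stands, your argument proves the second and third equalities of the corollary but not the first. To complete it you must either verify Wilkins' hypotheses for this $f$ (thereby legitimizing the term-by-term integration, after which the Weber--Schafheitlin orthogonality step is fine), or replace Step 1 by a direct evaluation of the Bessel integral --- for example the paper's hypergeometric computation, or a rigorous Mellin-transform version of it; once that integral is independently shown to equal $\beta_n/(2n)$ or \eqref{eq:C2.2}, your Steps 2 and 3 supply the remaining equalities.
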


\section*{Proof of Results}

We begin by establishing an integral representation for the Neumann coefficients of $\tfrac{1}{2}\mathrm{Si}(x)$.

\begin{lemma}
For $n \in \{0,1,2,\dots\}$, the following identity holds:
\begin{equation}
\int_{0}^{\pi/2}\sin[(2n+1)t]\cot(t)\mathrm{d}t = 1-2\sum_{k=1}^{n}\frac{(-1)^k}{4k^2-1}
\end{equation}

\begin{proof}
The proof of Lemma 1 begins with the trigonometric identity

\begin{equation}
\frac{\sin\left[\left(2n+1\right)t\right]}{\sin(t)} = 1 + 2\sum_{k=1}^{n}\cos(2kt) \label{eq:pL1.1}
\end{equation}

which can be established by summing over the product-to-sum formula

\begin{equation*}
2\sin(t)\cos(2kt) = \sin\left[\left(2k+1\right)t\right] - \sin\left[\left(2k-1\right)t\right]
\end{equation*}

then dividing by $\sin(t)$.\\

Proceeding on, we multiply \eqref{eq:pL1.1} by $\cos(t)$ and integrate over $\left[0,\tfrac{\pi}{2}\right]$, which produces

\begin{equation*}
\int_{0}^{\pi/2}\sin[(2n+1)t]\cot(t)\mathrm{d}t = \int_{0}^{\pi/2}\cos(t)\mathrm{d}t + 2\int_{0}^{\pi/2}\cos(t)\sum_{k=1}^{n}\cos(2kt)\mathrm{d}t
\end{equation*}

After interchanging the order of integration and summation on the right-hand side and carrying out the integration, using along the way the elementary integral

\begin{equation*}
\int_{0}^{\pi/2}\cos(t)\cos(2kt)\mathrm{d}t = -\frac{\cos(\pi k)}{4k^2-1}
\end{equation*}

we discover that

\begin{equation*}
\int_{0}^{\pi/2}\sin[(2n+1)t]\cot(t)\mathrm{d}t = 1-2\sum_{k=1}^{n}\frac{(-1)^k}{4k^2-1}
\end{equation*}

Breaking up the sum on the RHS gives

\begin{align*}
1+2\sum_{k=1}^{n}\frac{(-1)^{k-1}}{4k^2-1} &= 1 + \sum_{k=1}^{n}(-1)^{k-1}\left(\frac{1}{2k-1} - \frac{1}{2k+1}\right) \\
&= 1 + \sum_{k=1}^{n}\frac{(-1)^{k-1}}{2k-1} + \sum_{k=2}^{n+1}\frac{(-1)^{k-1}}{2k-1} \\
&= 2\sum_{k=1}^{n}\frac{(-1)^{k-1}}{2k-1} + \frac{(-1)^{n}}{2n+1}
\end{align*}

as desired, which enables us to move on to the proof of Proposition 1.

\end{proof}

\end{lemma}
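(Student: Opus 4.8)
The plan is to read the integrand as a Dirichlet kernel weighted by $\cos(t)$. Writing $\cot(t) = \cos(t)/\sin(t)$, the integrand becomes $\cos(t)\cdot\frac{\sin[(2n+1)t]}{\sin(t)}$, where the quotient is precisely the (scaled) Dirichlet kernel. The first step is to establish the finite-sum identity
\begin{equation*}
\frac{\sin[(2n+1)t]}{\sin(t)} = 1 + 2\sum_{k=1}^{n}\cos(2kt),
\end{equation*}
which follows by telescoping the product-to-sum relation $2\sin(t)\cos(2kt) = \sin[(2k+1)t] - \sin[(2k-1)t]$ over $k = 1,\dots,n$ and then dividing by $\sin(t)$. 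A welcome byproduct of this rewriting is that it removes the apparent singularity at $t = 0$: although $\cot(t)$ diverges there, the factor $\sin[(2n+1)t]$ vanishes to first order, so the original integrand is in fact continuous on $[0,\pi/2]$ and the integral poses no convergence difficulty.

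With the kernel identity in hand, I would multiply through by $\cos(t)$ and integrate over $[0,\pi/2]$. Since the sum is finite, term-by-term integration is immediate and requires no analytic justification. The leading term contributes $\int_{0}^{\pi/2}\cos(t)\,\mathrm{d}t = 1$, while each remaining term reduces to the elementary integral $\int_{0}^{\pi/2}\cos(t)\cos(2kt)\,\mathrm{d}t$. Evaluating this through the product-to-sum expansion $\cos(t)\cos(2kt) = \tfrac{1}{2}[\cos((2k+1)t) + \cos((2k-1)t)]$ together with the boundary values $\sin((2k\pm 1)\tfrac{\pi}{2}) = \pm(-1)^{k}$ gives $\int_{0}^{\pi/2}\cos(t)\cos(2kt)\,\mathrm{d}t = -\frac{(-1)^{k}}{4k^{2}-1}$. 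Collecting the contributions then produces $1 - 2\sum_{k=1}^{n}\frac{(-1)^{k}}{4k^{2}-1}$, exactly the asserted value.

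There is no genuine obstacle in this argument; everything is elementary once the kernel identity is invoked. The only step demanding care is the sign bookkeeping in the boundary evaluation, where one must correctly track $\sin((2k+1)\tfrac{\pi}{2})$ and $\sin((2k-1)\tfrac{\pi}{2})$ so that the two half-contributions combine into $-(-1)^{k}/(4k^{2}-1)$ rather than cancelling or doubling.

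As an alternative I might recast the same computation as an induction on $n$. Denoting the integral by $I_{n}$, the identity $\sin[(2n+1)t] - \sin[(2n-1)t] = 2\cos(2nt)\sin(t)$ yields the recurrence $I_{n} - I_{n-1} = 2\int_{0}^{\pi/2}\cos(2nt)\cos(t)\,\mathrm{d}t = -\frac{2(-1)^{n}}{4n^{2}-1}$, together with the base case $I_{0} = \int_{0}^{\pi/2}\cos(t)\,\mathrm{d}t = 1$. Telescoping this recurrence reproduces the stated closed form while bypassing the explicit Dirichlet kernel, at the modest cost of setting up the induction.
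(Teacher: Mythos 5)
Your proof is correct and follows essentially the same route as the paper: the Dirichlet kernel identity $\sin[(2n+1)t]/\sin(t) = 1 + 2\sum_{k=1}^{n}\cos(2kt)$, multiplication by $\cos(t)$, and term-by-term evaluation of $\int_{0}^{\pi/2}\cos(t)\cos(2kt)\,\mathrm{d}t = -(-1)^{k}/(4k^{2}-1)$. Your sign bookkeeping checks out, and the inductive variant you sketch is just a telescoped repackaging of the same computation, so no substantive difference from the paper's argument.
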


\begin{proof}[Proof of Proposition 1]

Formula 8.518.6 in \cite{GR} states that

\begin{equation}
\sum_{n=0}^{\infty}J_{2n+1}(a)\sin(2n+1)t = \frac{\sin(a\sin(t))}{2} \label{eq:pP10.1}
\end{equation}

After multiplying each side of \eqref{eq:pP10.1} by $\cot(t)$, then integrating with respect to $t$ over $[0,\frac{\pi}{2}]$ we obtain

\begin{equation*}
\sum_{n=0}^{\infty}J_{2n+1}(a)\int_{0}^{\pi/2}\sin[(2n+1)t]\cot(t)\mathrm{d}t = \frac{1}{2}\mathrm{Si}(a) \label{eq:pP10.2}
\end{equation*}

where we have justified the interchange of summation and integration by the absolute convergence of the series, and used

\begin{equation*}
\mathrm{Si}(a) =\int_{0}^{\pi/2}\sin(a\sin{t})\cot(t)\mathrm{d}t
\end{equation*}

which may be quickly established with the change of variable $z = \sin(t)$.
Employing Lemma 1 then completes the proof.

\begin{align*}
\frac{1}{2}\mathrm{Si}(a) &= \sum_{n=0}^{\infty}J_{2n+1}(a)\int_{0}^{\pi/2}\sin[(2n+1)t]\cot(t)\mathrm{d}t \\
&= \sum_{n=0}^{\infty} J_{2n+1}(a)\alpha_n
\end{align*}

\end{proof}

In \cite{wilkins} Wilkins showed that for a function $f(x)$ possessing a Neumann series expansion of the form 

\begin{equation}
f(x) = \sum_{n=0}^{\infty}a_{n}J_{2n+v+1}(x) \quad (v > -1) \label{eq:Expansion}
\end{equation}

there exists an integral representation for the Neumann coefficients $a_{n}$. We present this result as the following lemma.

\begin{lemma}[\textbf{Wilkins}]
For a function $f(x)$ possessing an expansion as in \eqref{eq:Expansion} the following representation for the Neumann coefficients of $f(x)$ holds

\begin{equation}
a_{n} = 2(2n+v+1)\int_{0}^{\infty}f(t)J_{2n+v+1}(t)\frac{\mathrm{d}t}{t}
\end{equation}

In addition, if $f(x)$ possesses a power series expansion of the form $x^{-v}f(x) = \sum_{n=0}^{\infty}b_{n} x^{2n+1}$, then we have

\begin{equation}
a_{n} = 2(2n+v+1)\sum_{k=0}^{n}\frac{\Gamma(n+k+v+1)}{(n-k)!}2^{2k+v}b_{n}
\end{equation} 

\end{lemma}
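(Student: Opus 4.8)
The plan is to exploit the orthogonality of Bessel functions whose orders differ by an even integer. The basic tool is the Weber--Schafheitlin integral
\begin{equation*}
\int_{0}^{\infty} J_{\mu}(t) J_{\nu}(t) \frac{\mathrm{d}t}{t} = \frac{2\sin\!\left(\tfrac{\pi(\mu-\nu)}{2}\right)}{\pi(\mu^{2}-\nu^{2})},
\end{equation*}
valid for $\mu+\nu>0$. Specializing to $\mu = 2m+v+1$ and $\nu = 2n+v+1$, the difference $\mu-\nu = 2(m-n)$ is an even integer, so the sine vanishes whenever $m \neq n$, while the limiting value at $m=n$ equals $\tfrac{1}{2\nu}$. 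Since $v>-1$ forces $m+n+v+1>0$, the convergence hypothesis is always met, and we obtain the orthogonality relation
\begin{equation*}
\int_{0}^{\infty} J_{2m+v+1}(t) J_{2n+v+1}(t) \frac{\mathrm{d}t}{t} = \frac{\delta_{mn}}{2(2n+v+1)}.
\end{equation*}

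First I would multiply the expansion \eqref{eq:Expansion} by $J_{2n+v+1}(t)/t$ and integrate over $(0,\infty)$. After interchanging summation and integration---justified by the uniform convergence of the Neumann series together with the decay of the Bessel factors---only the $m=n$ term survives, yielding
\begin{equation*}
\int_{0}^{\infty} f(t) J_{2n+v+1}(t) \frac{\mathrm{d}t}{t} = \frac{a_{n}}{2(2n+v+1)},
\end{equation*}
which rearranges to the first claimed formula.

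For the second formula I would insert the power series $f(t) = \sum_{k=0}^{\infty} b_{k} t^{2k+v+1}$ into the integral representation just obtained and integrate term by term, reducing everything to the Mellin transform of a single Bessel function,
\begin{equation*}
\int_{0}^{\infty} t^{s-1} J_{\nu}(t)\,\mathrm{d}t = \frac{2^{s-1}\Gamma\!\left(\tfrac{\nu+s}{2}\right)}{\Gamma\!\left(\tfrac{\nu-s}{2}+1\right)}.
\end{equation*}
Setting $s = 2k+v+1$ and $\nu = 2n+v+1$ collapses the gamma arguments to $n+k+v+1$ in the numerator and $n-k+1$ in the denominator, producing the summand $2^{2k+v}\,\Gamma(n+k+v+1) b_{k}/(n-k)!$, whose sum over $k$ against the prefactor $2(2n+v+1)$ is precisely the stated expression (with the running index $b_{k}$).

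The main obstacle---and the point deserving the most care---is convergence. The Mellin integral above only converges classically for $\Re(s)<\tfrac{3}{2}$, which fails once $2k+v+1$ grows large, so the identity must be read as the analytic continuation in $s$ of the transform. This continuation is exactly what supplies the natural truncation of the sum: for $k>n$ the factor $1/\Gamma(n-k+1)$ vanishes, since $n-k+1$ is a non-positive integer and the gamma function has poles there. Hence the contributions of all terms with $k>n$ disappear, leaving the finite sum from $k=0$ to $n$ and completing the proof.
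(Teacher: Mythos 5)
A preliminary remark: the paper itself gives no proof of this lemma --- it is imported wholesale from Wilkins \cite{wilkins} --- so there is no internal argument to compare yours against, and your proposal must stand on its own. For the first formula it essentially does: the Weber--Schafheitlin value you quote is correct, specializing it to orders $2m+v+1$, $2n+v+1$ does give the orthogonality relation with the factor $1/\bigl(2(2n+v+1)\bigr)$ on the diagonal, and pairing the expansion \eqref{eq:Expansion} against $J_{2n+v+1}(t)/t$ isolates $a_n$. The one weak point is the interchange of summation and integration: uniform convergence of the Neumann series on compact sets does not by itself justify termwise integration over $(0,\infty)$, and finding usable sufficient conditions for exactly this step is the actual substance of Wilkins' theorem. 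Since the lemma as stated in the paper leaves those hypotheses implicit, your level of rigor here matches the statement's, but you should flag that this is where the real work lies.

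The second formula is where there is a genuine gap. After inserting $f(t)=\sum_k b_k t^{2k+v+1}$, every termwise integral $\int_0^\infty t^{2k+v}J_{2n+v+1}(t)\,\mathrm{d}t$ with $2k+v+1\geq\tfrac{3}{2}$ --- that is, all but finitely many of them --- diverges classically, and one cannot repair a termwise integration by declaring each divergent term to equal its analytic continuation in $s$: that is a summation prescription (in the spirit of the paper's ``method of brackets,'' which the author himself concedes is heuristic), not a proof. A clean rigorous route avoids the integral representation entirely: expand each power by Gegenbauer's formula
\begin{equation*}
\left(\frac{z}{2}\right)^{\mu}=\sum_{j=0}^{\infty}\frac{(\mu+2j)\,\Gamma(\mu+j)}{j!}\,J_{\mu+2j}(z),
\qquad \mu=2k+v+1,
\end{equation*}
substitute into $f(z)=\sum_k b_k z^{2k+v+1}$, and rearrange the double series (the standard justification is in Watson); collecting the coefficient of $J_{2n+v+1}$ by setting $n=k+j$ gives exactly
\begin{equation*}
a_{n}=2(2n+v+1)\sum_{k=0}^{n}\frac{\Gamma(n+k+v+1)}{(n-k)!}\,2^{2k+v}\,b_{k},
\end{equation*}
a finite sum with no divergent objects anywhere. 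Incidentally, this confirms the parenthetical observation you made: the $b_n$ in the paper's statement of the lemma is a typo for $b_k$, and your Mellin bookkeeping (gamma arguments $n+k+v+1$ and $n-k+1$, truncation at $k=n$ from the poles of the gamma function) is arithmetically correct; it is only the logical status of the divergent integrals that needs to be replaced by an honest argument.
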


\begin{proof}[Proof of Corollary 1]

From Lemma 2 we saw that the Neumann coefficients of $\tfrac{1}{2}\mathrm{Si}(x)$ are

\begin{equation*}
a_n =\int_{0}^{\pi/2}\sin[(2n+1)t]\cot(t)\mathrm{d}t = \alpha_n
\end{equation*}

consequently \eqref{eq:C1.0} and \eqref{eq:C1.1} immediately follow from the preceding lemma. \\

Likewise by the series definition for $\mathrm{Si}(x)$ and the above identity, the validity of \eqref{eq:C1.2} quickly follows.

\end{proof}

Before continuing on with the proof of Proposition 2, we first provide an integral for the Neumann coefficients in \eqref{eq:P2}.

\begin{lemma}
For $n \in \mathbb{N}_0$ the following identity holds:
\begin{equation}
\int_{0}^{\pi/2}\left[1-\cos(2nt)\right]\cot(t)\mathrm{d}t = H_n + A_n - \frac{1}{2n} - \frac{(-1)^{n-1}}{2n}
\end{equation}

\begin{proof}

As in the proof of Lemma 1, we begin with the identity

\begin{equation}
\frac{1-\cos(2nt)}{\sin(t)} = 2\sum_{k=1}^{n}\sin\left[(2k-1)t\right] \label{eq:pL2.1}
\end{equation}

which may be derived using

\begin{equation*}
2\sin(t)\sin\left[(2k-1)t\right] = \cos\left[\left(2k-2\right)t\right] - \cos(2kt)
\end{equation*}

By multiplying \eqref{eq:pL2.1} by $\cos(t)$  and integrating over $\left[0,\tfrac{\pi}{2}\right]$, we arrive at

\begin{equation*}
\int_{0}^{\pi/2}\left[1-\cos(2nt)\right]\cot(t)\mathrm{d}t = 2\int_{0}^{\pi/2}\cos(t)\sum_{k=1}^{n}\sin\left[(2k-1)t\right]\mathrm{d}t
\end{equation*}

Then, after interchanging the order of summation and integration, and using the identity

\begin{equation}
2\int_{0}^{\pi/2}\cos(t)\sin\left[(2k-1)t\right]\mathrm{d}t = \frac{1 - \cos(\pi k)}{2k} + \frac{1 + \cos(\pi k)}{2k-2} \label{eq:pL2.2}
\end{equation}

we come to the following representation

\begin{equation*}
\int_{0}^{\pi/2}\left[1-\cos(2nt)\right]\cot(t)\mathrm{d}t = \sum_{k=1}^{n}\frac{1-(-1)^k}{2k} + \sum_{k=1}^{n}\frac{1+(-1)^k}{2k-2}
\end{equation*}

which after some rearrangement gives

\begin{align*}
	\int_{0}^{\pi/2}\left[1-\cos(2nt)\right]\cot(t)\mathrm{d}t &= \sum_{k=1}^{n}\frac{1}{k} + \sum_{k=1}^{n}\frac{(-1)^{k-1}}{k} - \frac{1}{2n} - \frac{(-1)^{n-1}}{2n} 
\end{align*}

as desired. \\

With the proof of Lemma 2 complete, we move on to the proof of Proposition 2.

\end{proof}

\end{lemma}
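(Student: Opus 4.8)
The plan is to mirror the argument used for the first lemma, replacing the cosine kernel inside the finite sum with a sine kernel. The starting point will be the telescoping trigonometric identity
\begin{equation*}
\frac{1-\cos(2nt)}{\sin t} = 2\sum_{k=1}^{n}\sin[(2k-1)t],
\end{equation*}
which I would obtain by summing the product-to-sum relation $2\sin t\,\sin[(2k-1)t] = \cos[(2k-2)t] - \cos(2kt)$ over $k = 1,\dots,n$; the right-hand side collapses to $\cos(0) - \cos(2nt) = 1 - \cos(2nt)$, and dividing through by $\sin t$ yields the displayed identity.

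Multiplying by $\cos t$ and integrating over $[0,\pi/2]$ would then reduce the target to
\begin{equation*}
\int_{0}^{\pi/2}[1-\cos(2nt)]\cot t\,\mathrm{d}t = 2\sum_{k=1}^{n}\int_{0}^{\pi/2}\cos t\,\sin[(2k-1)t]\,\mathrm{d}t,
\end{equation*}
the interchange of the (finite) sum and the integral being immediate. To evaluate each summand I would invoke $2\cos t\,\sin[(2k-1)t] = \sin(2kt) + \sin[(2k-2)t]$ and integrate term by term, extracting $\tfrac{1-(-1)^k}{2k}$ from the first piece and $\tfrac{1+(-1)^k}{2k-2}$ from the second for $k \geq 2$.

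The step requiring care is the $k=1$ summand: there the factor $\sin[(2k-2)t] = \sin 0$ vanishes identically, so the naive formula $\tfrac{1+(-1)^k}{2k-2}$ produces an indeterminate $0/0$. I would treat $k=1$ separately, noting that its contribution is simply $2\int_{0}^{\pi/2}\cos t\,\sin t\,\mathrm{d}t = 1$, which is consistent with assigning the value $0$ to the degenerate second piece (whose numerator $1+(-1)^1$ already vanishes).

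Finally, collecting the two resulting finite sums, I would split each into a harmonic and an alternating part. The first sum contributes $\tfrac12(H_n + A_n)$, while reindexing the second via $j = k-1$ converts it into $\sum_{j=1}^{n-1}\tfrac{1-(-1)^j}{2j} = \tfrac12(H_{n-1} + A_{n-1})$. Applying $H_{n-1} = H_n - \tfrac1n$ and $A_{n-1} = A_n - \tfrac{(-1)^{n-1}}{n}$ and combining would give $H_n + A_n - \tfrac{1}{2n} - \tfrac{(-1)^{n-1}}{2n}$, the desired closed form. The main obstacle is not analytic but bookkeeping: correctly reindexing the shifted sum and tracking the degenerate boundary term so that the two halves assemble into the stated combination of $H_n$ and $A_n$.
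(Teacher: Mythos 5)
Your proposal is correct and follows essentially the same route as the paper: the telescoping identity $\tfrac{1-\cos(2nt)}{\sin t} = 2\sum_{k=1}^{n}\sin[(2k-1)t]$, termwise integration against $\cos t$, and reassembly into $H_n + A_n - \tfrac{1}{2n} - \tfrac{(-1)^{n-1}}{2n}$. If anything, you are more careful than the paper, which leaves the degenerate $k=1$ term $\tfrac{1+\cos(\pi k)}{2k-2}$ (a $0/0$ expression whose numerator vanishes) and the final reindexing implicit, whereas you make both explicit.
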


\begin{proof}[Proof of Proposition 2]

Formula 8.514.5 in \cite{GR} states that

\begin{equation}
\cos(a\sin{t}) = J_0(a) + 2\sum_{n=1}^{\infty}J_{2n}(a)\cos(2nt)
\end{equation}

Letting $t=0$ gives

\begin{equation*}
1 = J_0(a) + 2\sum_{n=1}^{\infty}J_{2n}(a)
\end{equation*}

Therefore

\begin{equation}
1-\cos(a\sin{t}) = 2\sum_{n=1}^{\infty}J_{2n}(a)\left[1-\cos(2nt)\right] \label{eq:pP2.1}
\end{equation}

Multiplying each side of \eqref{eq:pP2.1} by $\cot(t)$ and integrating with respect to $t$ over $\left[0,\tfrac{\pi}{2}\right]$ now gives

\begin{equation*}
\gamma+\log(a)-\mathrm{Ci}(a) = 2\sum_{n=1}^{\infty}J_{2n}(a)\int_{0}^{\pi/2}\left[1-\cos(2nt)\right]\cot(t)\mathrm{d}t \label{pP2.2}
\end{equation*}

where we have interchanged the order of summation and integration on the right hand side, and employed the following integral on the left

\begin{equation}
\int_{0}^{\pi/2}\left[1-\cos(a\sin{t})\right]\cot(t)\mathrm{d}t = \gamma+\log(a)-\mathrm{Ci}(a) \label{eq:pP2.3}
\end{equation}
which may be proven with a change of variable. 
 
Appealing to Lemma 3 to deal with the integral in the summand of \eqref{eq:pP2.3}, we find that

\begin{align*}
\mathrm{Ci}(a) &= \gamma + \log(a) - 2\sum_{n=1}^{\infty}J_{2n}(a)\int_{0}^{\pi/2}\left[1-\cos(2nt)\right]\cot(t)\mathrm{d}t \\
&= \gamma + \log(a) - 2\sum_{n=1}^{\infty}J_{2n}(a)\beta_n
\end{align*}

which completes the proof.
  
\end{proof}

\begin{proof}[Proof of Corollary 3]
To prove equations \eqref{eq:C2.0} and \eqref{eq:C2.1} we are forced to use a different method than Wilkins' result of Lemma 2, since \eqref{eq:pL2.1} is only valid for $v > -1$.  In \cite{wilkins} Wilkins does investigate the case where $v=-1$ and in Theorem 1.2 gives criteria which if met allows one to prove that

\begin{equation*}
a_{n} = 4n\int_{0}^{\infty}f(t)J_{2n}(t)\frac{\mathrm{d}t}{t}
\end{equation*}

In this instance we choose to forgo Wilkins theorem and take a more direct approach; presenting a proof of Corollary 2 via the \textit{method of brackets}.

The method of brackets was developed by I. Gonzales and others as an extension of Ramanujan's Master Theorem, to help tackle integrals associated with Feynman diagrams, and has proved to be a powerful tool for solving certain classes of integrals.  It should be noted that the method is still under development and at this point is still partially heuristic in nature.  For more information on the method see the recent paper \cite{gonzalez} and the references therein.

We begin by outlining the method of brackets.  A \textit{bracket} is the formal symbol $\langle a \rangle$ associated with the divergent integral

\begin{equation*}
\langle a \rangle = \int_{0}^{\infty}x^{a-1}\mathrm{d}x 
\end{equation*}

Then for a function $f(x)$ given by the formal power series

\begin{equation*}
f(x) = \sum_{n=0}^{\infty}a_n x^{\alpha n + \beta - 1}
\end{equation*}

the improper integral of $f(x)$ over the half line is formally written as the bracket series

\begin{equation*}
\int_{0}^{\infty}f(x)\mathrm{d}x  = \sum_{n=0}^{\infty}a_n\langle{\alpha n + \beta}\rangle
\end{equation*}

In addition, for convenience we introduce the symbol

\begin{equation*}
\phi_n = \frac{(-1)^n}{\Gamma(n+1)}
\end{equation*}

called the \textit{indicator of n}.

Lastly, a series of brackets is given a value according to the rule

\begin{equation}
\sum_{n=0}^{\infty}\phi_n f(n)\langle{a n + b}\rangle = \frac{1}{|a|}f(n^*)\Gamma(-n^*) \label{bracket rule}
\end{equation}

where $n^*$ is the solution to the equation $an+b=0$.  For our proof this rule is the only one needed and is essentially just Ramanujan's Master Theorem, but a generalization of this rule to multi-indexed sums, as well as others may be found in the reference cited above.

Now we proceed on to the proof of Corollary 2.  Let us first write the cosine integral in its hypergeometric form

\begin{align}
\gamma+\log(t)-\mathrm{Ci}(t) &= \frac{t^2}{4}{}_2 F_3\Biggl[\begin{array}{@{}c@{}c@{}c@{}c@{}c@{}c@{}}
& 1, & & 1 &\\
2, & & 2, & & \tfrac{3}{2} \\
\end{array}\:\biggr\vert \, {-\frac{t^2}{4}}\Biggr] \\
&= \frac{t^2}{4}\sum_{n=0}^{\infty}\frac{(1)_n(1)_n(-1)^n}{(2)_n (2)_n (\frac{3}{2})_n}\frac{t^{2n}}{2^{2n}n!}
\end{align}

where ${}_p F_q$ is the generalized hypergeometric function defined by

\begin{equation*}
{}_p F_q\Biggl[\begin{array}{@{}c@{}c@{}c@{}c@{}c@{}c@{}}
a_1 & & \dots & & a_p \\
b_1 & & \dots & & b_q\\
\end{array}\:\biggr\vert \, {z} \, \Biggr] = \sum_{n=0}^{\infty}\frac{(a_1)_n\cdots(a_p)_n}{(b_1)_n \cdots (b_q)_n}\frac{t^{n}}{n!}
\end{equation*}

and the Pochhammer symbol $(a)_n = \Gamma(n+a)/\Gamma(a)$.

Evidently we have

\begin{multline}
t^{-1}\left[\gamma+\log(t)-\mathrm{Ci}(t)\right]J_{2n}(t) = \left(\frac{t}{4}\sum_{j=0}^{\infty}\frac{(1)^2_j2^{-2j}}{(2)^2_j (\frac{3}{2})_j}\frac{(-1)^j}{j!}t^{2j}\right) \\ \times \left(\frac{t^{2n}}{2^{2n}}\sum_{k=0}^{\infty}\frac{2^{-2k}}{\Gamma(2n+k+1)}\frac{(-1)^k}{k!}t^{2k}\right)
\end{multline}

Using the elementary identity for exponential generating functions

\begin{equation*}
\left(\sum_{j=0}^{\infty}a_j\frac{x^j}{j!}\right)\left(\sum_{k=0}^{\infty}b_k\frac{x^k}{k!}\right) = \sum_{j=0}^{\infty}\left(\sum_{k=0}^{j}\binom{j}{k}a_k b_{j-k}\right)\frac{x^j}{j!}
\end{equation*}

we have

\begin{multline*}
t^{-1}\left[\gamma+\log(t)-\mathrm{Ci}(t)\right]J_{2n}(t) = \\ \frac{t^{2n+1}}{2^{2n+2}}\sum_{j=0}^{\infty}\left(\sum_{k=0}^{j}\binom{j}{k}\frac{(1)^2_k}{(2)^2_k (\frac{3}{2})_k}\frac{1}{\Gamma(j-k+2n+1)}\right)\frac{(-1)^j}{2^{2j}}\frac{t^{2j}}{j!}
\end{multline*}

Thus integrating with respect to $t$ over the half line and using \eqref{bracket rule} we obtain the bracket series

\begin{multline}
\int_{0}^{\infty}\left[\gamma+\log(t)-\mathrm{Ci}(t)\right]J_{2n}(t)\frac{\mathrm{d}}{t} = \\ \sum_{j=0}^{\infty}\left(\sum_{k=0}^{\infty}\binom{j}{k}\frac{(1)^2_k}{(2)^2_k (\frac{3}{2})_k}\frac{1}{\Gamma(j-k+2n+1)}\right)\frac{\phi_j}{2^{2j+2n+2}}\langle{2j+2n+2}\rangle 
\end{multline}

where we have used the indicator notation as well as the fact that $\sum_{k=0}^{j}\binom{j}{k}a_k = \sum_{k=0}^{\infty}\binom{j}{k}a_k$ since the binomial coefficients vanish when $k>j$.

The solution to $2j+2n+2=0$ is $j* = -n-1$, consequently the bracket series rule \eqref{bracket rule} gives

\begin{multline}
\int_{0}^{\infty}\left[\gamma+\log(t)-\mathrm{Ci}(t)\right]J_{2n}(t)\frac{\mathrm{d}}{t} = \frac{1}{2}\sum_{k=0}^{\infty}\binom{-n-1}{k}\frac{(1)^2_k}{(2)^2_k (\frac{3}{2})_k}\frac{\Gamma(n+1)}{\Gamma(n-k)}
\end{multline}

The binomial coefficients at negative integers can be rewritten as

\begin{equation*}
\binom{-n}{k} = (-1)^k \binom{n+k-1}{k}
\end{equation*}

and with this in mind we have

\begin{multline}
\frac{1}{2}\sum_{k=0}^{\infty}\binom{-n-1}{k}\frac{(1)^2_k}{(2)^2_k (\frac{3}{2})_k}\frac{\Gamma(n+1)}{\Gamma(n-k)} = \frac{1}{2}\sum_{k=0}^{n-1}\binom{n+k}{k}\frac{(-1)^k (1)^2_k}{(2)^2_k (\frac{3}{2})_k}\frac{\Gamma(n+1)}{\Gamma(n-k)}
\end{multline}

noting that the series has been truncated at $k=n-1$ due to the poles at the negative integers of the gamma function in the denominator.

Now rewriting the summand in terms of factorials, the following is obtained

\begin{align*}
\frac{1}{2}\sum_{k=0}^{n-1}\binom{n+k}{k}\frac{(-1)^k (1)^2_k}{(2)^2_k (\frac{3}{2})_k}\frac{\Gamma(n+1)}{\Gamma(n-k)} &= \sum_{j=0}^{n-1}\frac{(-1)^j 2^{2j}}{(j+1) (2j+2)!}\frac{(n+j)!}{(n-j-1)!} \\ 
&= \frac{n}{2}
{}_4 F_3\Biggl[\begin{array}{@{}c@{}c@{}c@{}c@{}c@{}c@{}r@{}r@{}r@{}r@{}}
1, & & & 1, & & & 1-n, & & & 1+n\\
& & & 2, & & &  2, & & & \tfrac{3}{2} \\
\end{array}\:\biggr\vert \, {1}\Biggr]
\end{align*}

where we have used 

$$(a)_{2n} = 2^{2n}\left(\frac{a}{2}\right)_n \left(\frac{1+a}{2}\right)_n$$

\textit{Mathematica 11} readily evaluates the finite sum as

\begin{multline*}
\sum_{j=0}^{n}\frac{(-1)^j 2^{2j}}{(j+1) (2j+2)!}\frac{(n+j)!}{(n-j-1)!} = \\ (-1)^{n+1}\frac{\Phi (-1,1,n+1)}{ 2n} + \frac{\log (2)}{2n} - \frac{(-1)^{n-1}}{4 n^2} + \frac{\psi(n+1) + \gamma}{2n} - \frac{1}{4 n^2}
\end{multline*}

where $\Phi (x,s,a)= \sum_{k=0}^{\infty}\frac{x^k}{(k+a)^s}$ is the Lerch Phi function and $\psi(x)=\Gamma'(x)/\Gamma(x)$ is the psi or digamma function. 

Noting that

\begin{equation*}
(-1)^{n+1}\frac{\Phi (-1,1,n+1)}{ 2n} + \frac{\log (2)}{2n} = \frac{A_n}{2n}
\end{equation*}

and 

\begin{equation*}
\frac{\psi(n+1) + \gamma}{2n} = \frac{H_n}{2n}
\end{equation*}

we obtain the expected result

\begin{align*}
\int_{0}^{\infty}\left[\gamma+\log(t)-\mathrm{Ci}(t)\right]J_{2n}(t)\frac{\mathrm{d}}{t} &= \frac{H_n}{2n} + \frac{A_n}{2n} - \frac{1}{4 n^2} - \frac{(-1)^{n-1}}{4 n^2} \\
&= \frac{\beta_n}{2n}
\end{align*}

We have now proven the second and third equalities of Corollary 2, namely equations \eqref{eq:C2.1} and \eqref{eq:C2.2}. In addition the first equality, equation \eqref{eq:C2.0} follows immediately from Lemma 2.

\end{proof}

\section*{Applications}

We now present some applications of the Neumann series expansions given above.  In particular, we evaluate certain integrals involving series of Bessel functions and the cosine integral in closed form.  In addition, two other results are given as further examples of applications of Propositions 1 and 2 and their corollaries.

\begin{corollary}
For $k \geq 1$,
\begin{multline}
	4\int_{0}^{\infty}\big(\gamma+\log(t)- \mathrm{Ci}(t)\big)\left(\sum_{n=1}^{\infty}\frac{J_{2n}(t)}{n^{k}}\right)\frac{\mathrm{d}t}{t} = 2\log(2)\zeta(k+1) + \zeta(k+2) \\ + \eta(k+2) + \sum_{j=1}^{k+1}\eta(k+2-j)\eta(j) - \sum_{j=1}^{k-1}\zeta(k+1-j)\zeta(j+1) \label{eq:prop3.0}
\end{multline}

\end{corollary}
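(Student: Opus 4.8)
The plan is to pass the integration through the series in $n$, evaluate the resulting single-index integral by the corollary to Proposition~2, and thereby reduce the identity to two classical linear Euler sums. First I would justify interchanging summation and integration---using that $\gamma+\log(t)-\mathrm{Ci}(t)=\int_0^t\frac{1-\cos u}{u}\,\mathrm{d}u\ge0$ together with the bounds on $J_{2n}$ that already underlie Proposition~2---so that the left-hand side becomes
\begin{equation*}
4\sum_{n=1}^{\infty}\frac{1}{n^{k}}\int_{0}^{\infty}\big(\gamma+\log(t)-\mathrm{Ci}(t)\big)J_{2n}(t)\frac{\mathrm{d}t}{t}.
\end{equation*}
Invoking \eqref{eq:C2.1} to replace each inner integral by $\beta_n/(2n)$ then collapses the whole left-hand side to the single series $2\sum_{n\ge1}\beta_n/n^{k+1}$, whose terms decay like $n^{-(k+1)}\log n$, so convergence is clear for $k\ge1$.

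Next I would insert the explicit coefficient $\beta_n=H_n+A_n-\tfrac{1}{2n}-\tfrac{(-1)^{n-1}}{2n}$ from Proposition~2 and separate the four resulting contributions:
\begin{equation*}
2\sum_{n=1}^{\infty}\frac{\beta_n}{n^{k+1}}=2\sum_{n=1}^{\infty}\frac{H_n}{n^{k+1}}+2\sum_{n=1}^{\infty}\frac{A_n}{n^{k+1}}-\zeta(k+2)-\eta(k+2),
\end{equation*}
the last two terms arising from $\tfrac{1}{2n}$ and $\tfrac{(-1)^{n-1}}{2n}$. The task is thus reduced to evaluating the nonalternating Euler sum $\sum_n H_n/n^{k+1}$ and its alternating counterpart $\sum_n A_n/n^{k+1}$.

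For the first sum I would quote Euler's classical evaluation, which in the present normalization reads
\begin{equation*}
2\sum_{n=1}^{\infty}\frac{H_n}{n^{k+1}}=(k+3)\zeta(k+2)-\sum_{j=1}^{k-1}\zeta(k+1-j)\zeta(j+1),
\end{equation*}
and already accounts for the full $\zeta$-convolution on the right-hand side. The main obstacle is the alternating sum $\sum_n A_n/n^{k+1}$, a genuine weight-$(k+2)$ Euler sum not reachable by the harmonic (stuffle) product alone, since the naive symmetrization against $\log(2)\,\zeta(k+1)$ collapses to a tautology. The route I would follow is the integral representation $A_n=\log 2-(-1)^n\int_0^1\frac{x^n}{1+x}\,\mathrm{d}x$, which yields
\begin{equation*}
\sum_{n=1}^{\infty}\frac{A_n}{n^{k+1}}=\log(2)\,\zeta(k+1)-\int_0^1\frac{\mathrm{Li}_{k+1}(-x)}{1+x}\,\mathrm{d}x,
\end{equation*}
where $\mathrm{Li}_s$ denotes the polylogarithm. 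Repeated integration by parts then lowers the polylogarithm order one step at a time, each step shedding an $\eta$-value, and assembles precisely the convolution $\sum_{j=1}^{k+1}\eta(k+2-j)\eta(j)$ along with the $2\eta(k+2)$ and $(k+1)\zeta(k+2)$ correction terms; establishing this closed form in general---or quoting the equivalent reflection formula for alternating Euler sums from the references in the paper---is the crux of the proof.

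Finally I would substitute both evaluations into the split sum and collect like terms. The $\zeta$-double sum survives verbatim from Euler's formula, the $\eta$-convolution comes entirely from the alternating sum, and the isolated $\zeta(k+2)$ and $\eta(k+2)$ coefficients combine as $(k+3)-(k+1)-1=1$ and $2-1=1$, leaving exactly $\zeta(k+2)+\eta(k+2)$ and reproducing the claimed right-hand side. As a consistency check, at $k=1$ the formula forces $\sum_n A_n/n^2=\tfrac32\zeta(2)\log 2-\tfrac14\zeta(3)$, which matches the direct evaluation $\int_0^1\frac{\mathrm{Li}_2(-x)}{1+x}\,\mathrm{d}x=\tfrac14\zeta(3)-\tfrac12\zeta(2)\log 2$ and confirms that every constant is correctly placed.
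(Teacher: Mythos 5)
Your proposal follows the paper's proof almost step for step: interchange summation and integration, invoke \eqref{eq:C2.1} to collapse the left-hand side to $2\sum_{n\ge1}\beta_n/n^{k+1}$, split $\beta_n$ into its harmonic and alternating-harmonic pieces, and finish with the two classical linear Euler sums. Your bookkeeping with the unshifted $H_n$, $A_n$ (the paper uses $H_{n-1}$, $A_{n-1}$) is equivalent, your statement of Euler's formula is correct, and your final tally of the $\zeta(k+2)$, $\eta(k+2)$, $\log 2$, and convolution terms reproduces \eqref{eq:prop3.0} exactly. For the alternating sum $\sum_n A_n/n^{k+1}$ the paper simply quotes Nielsen's formula from the Borwein--Borwein--Girgensohn reference, which is precisely the fallback you allow yourself; on that branch your proof \emph{is} the paper's proof.

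Your primary route to that sum --- the step you yourself call the crux --- does, however, contain a genuine gap: repeated integration by parts does not by itself ``assemble the convolution.'' Writing $J_k=\int_0^1\mathrm{Li}_{k+1}(-x)(1+x)^{-1}\,\mathrm{d}x$ and $I_{a,b}=\int_0^1\mathrm{Li}_a(-x)\mathrm{Li}_b(-x)\,x^{-1}\,\mathrm{d}x$, one integration by parts gives $J_k=-\eta(1)\eta(k+1)+I_{1,k}$, and each further step is the ladder relation
\begin{equation*}
I_{a,b}=\eta(a+1)\eta(b)-I_{a+1,b-1},
\end{equation*}
which after $k-1$ steps returns
\begin{equation*}
I_{1,k}=\sum_{j=1}^{k-1}(-1)^{j-1}\eta(j+1)\eta(k+1-j)+(-1)^{k-1}I_{k,1},\qquad I_{k,1}=I_{1,k}.
\end{equation*}
This closes only when $k$ is even; when $k$ is odd the sign is $+1$ and the identity degenerates to the tautology $0=0$ (the alternating convolution cancels pairwise), so the ladder determines nothing. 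Note that $k=1$ --- the very case you use as a consistency check --- is of this bad parity: your value of $\int_0^1\mathrm{Li}_2(-x)(1+x)^{-1}\,\mathrm{d}x$ rests on the separate evaluation $\int_0^1\log^2(1+x)\,x^{-1}\,\mathrm{d}x=\tfrac14\zeta(3)$, which integration by parts cannot produce. Moreover, even in the even-$k$ cases where the ladder closes, it yields an \emph{alternating}-sign convolution, not the all-positive convolution $\sum_{j=1}^{k+1}\eta(k+2-j)\eta(j)$ appearing in \eqref{eq:prop3.0}, so further identities are still needed to match forms. In short, the polylogarithm route requires an additional ingredient (the family $\int_0^1\mathrm{Li}_a(-x)\mathrm{Li}_b(-x)x^{-1}\mathrm{d}x$ at equal or odd-gap indices, or equivalently a shuffle-type relation); as written, your proof is only complete on the branch where you quote the known reflection formula, as the paper does.
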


\begin{proof}

	Multiplying \eqref{eq:C2.1} by ${4}/{n^k}$ and summing over $[1,\infty)$ gives
	
	\begin{equation*}
		4\sum_{n=1}^{\infty}\frac{1}{n^{k}}\int_{0}^{\infty}\left[\gamma+\log(t)- \mathrm{Ci}(t)\right]J_{2n}(t)\frac{\mathrm{d}t}{t} = 2\sum_{n=1}^{\infty}\frac{\beta_n}{n^{k+1}}
	\end{equation*}
	
	Expressing the coefficients $\beta_n$ in the slightly altered form
	
	\begin{equation*}
		\beta_n = H_{n-1} + A_{n-1} + \frac{1}{2n} + \frac{(-1)^{n-1}}{2n}
	\end{equation*}
	
	then breaking up the sum on the RHS into its constituent parts we have
	
	\begin{equation}
		2\sum_{n=1}^{\infty}\frac{\beta_n}{n^{k+1}} = 2\sum_{n=1}^{\infty}\frac{H_{n-1}}{n^{k+1}} + 2\sum_{n=1}^{\infty}\frac{A_{n-1}}{n^{k+1}} + \zeta(k+2) + \eta(k+2) \label{eq:prfP3.0}
	\end{equation}
	
	Evaluations of Euler sums of the type found above are well known, for instance in \cite{borwein} we find the following evaluation due to Euler
	
	\begin{equation*}
		2\sum_{n=1}^{\infty}\frac{H_{n-1}}{n^k} = k\zeta(k+1) - \sum_{j=1}^{k-2}\zeta(k-j)\zeta(j+1)
	\end{equation*}
	
	as well as Nielson's formula
	
	\begin{equation*}
		2\sum_{n=1}^{\infty}\frac{A_{n-1}}{n^k} = 2\log(2)\zeta(k) - k\zeta(k+1) + \sum_{j=1}^{k}\eta(k+1-j)\eta(j)
	\end{equation*}
	
	Apparently after inserting these equations into \eqref{eq:prfP3.0} above, we are led to
	
	\begin{multline*}
		4\sum_{n=1}^{\infty}\frac{1}{n^{k}}\int_{0}^{\infty}\left[\gamma+\log(t)- \mathrm{Ci}(t)\right]J_{2n}(t)\frac{\mathrm{d}t}{t} = 2\log(2)\zeta(k+1) + \zeta(k+2) \\ + \eta(k+2) + \sum_{j=1}^{k+1}\eta(k+2-j)\eta(j) - \sum_{j=1}^{k-1}\zeta(k+1-j)\zeta(j+1)
	\end{multline*}
	
	which after results in \eqref{eq:prop3.0} after changing the order of summation and integration.
	
\end{proof}

\begin{example}

\begin{equation}
\int_{0}^{\infty}\left(\frac{\gamma+\log(t)-\mathrm{Ci}(t)}{t}\right)\left(2S_{-1,0}(t)-\frac{\partial^2}{\partial v^2}J_v(t)\big\vert_{v=0}\right)\mathrm{d}t
\nonumber = \frac{7}{8}\log(2)\zeta(3) \label{eq:C3.2}
\end{equation}

\end{example}

\begin{proof}

With the help of the following formula (57.2.38 in \cite{hansen}) 

\begin{equation*}
\sum_{n=1}^{\infty}\frac{1}{n^2}J_{2n}(t) = 2S_{-1,0}(t)-\frac{\pi^2}{3}J_0(t)-\frac{\partial^2}{\partial v^2}J_v(t)\big\vert_{v=0}
\end{equation*}

where $S_{\mu,v}(t)$ (see \cite{GR}) is the associated Lommel function, we find that setting $k = 2$ in Corollary 3 gives

\begin{multline}
4\int_{0}^{\infty}\left(\frac{\gamma+\log(t)-\mathrm{Ci}(t)}{t}\right)\left(2S_{-1,0}(t)-\frac{\pi^2}{3}J_0(t)-\frac{\partial^2}{\partial v^2}J_v(t)\big\vert_{v=0}\right)\mathrm{d}t \\ = \frac{7}{2}\log(2)\zeta(3)\label{eq:PrfC3}
\end{multline}

We may separate the integral into two parts, namely

\begin{multline}
4\int_{0}^{\infty}\left(\frac{\gamma+\log(t)-\mathrm{Ci}(t)}{t}\right)\left(2S_{-1,0}(t)-\frac{\partial^2}{\partial v^2}J_v(t)\big\vert_{v=0}\right)\mathrm{d}t \\ \qquad - \frac{4\pi^2}{3}\int_{0}^{\infty}\left(\frac{\gamma+\log(t)-\mathrm{Ci}(t)}{t}\right)J_0(t)\mathrm{d}t \\ = \frac{7}{2}\log(2)\zeta(3)
\end{multline}

where the first integral cannot be split up any further. Then noting that

\begin{equation*}
\int_{0}^{\infty}\left(\frac{\gamma+\log(t)-\mathrm{Ci}(t)}{t}\right)J_0(t)\mathrm{d}t =
0
\end{equation*}

by taking the limit as $n \rightarrow 0$ in equation \eqref{eq:C2.0} of Corollary 2.  After dividing each side of \eqref{eq:PrfC3} by $4$ gives us our desired result.

\end{proof}

\begin{remark}
There are in fact, known closed form solutions for the Neumann series

\begin{equation*}
\sum_{n=1}^{\infty}\frac{(\pm 1)^n}{n^{2k}}J_{2n}(t)
\end{equation*}

expressed in terms of recursively defined functions.  In particular, we have 

\begin{equation*}
\sum_{n=1}^{\infty}\frac{1}{n^{2k+2}}J_{2n}(t) = T_k(t)
\end{equation*}

where $T_k(t)$ is defined for $k=0$ as

\begin{equation*}
T_0(t) = 2S_{-1,0}(t)-\frac{\pi^2}{3}J_0(t)-\frac{\partial^2}{\partial v^2}J_v(t)\big\vert_{v=0}
\end{equation*}

and for $k>0$ as

\begin{multline*}
\sum_{j=0}^{k-1}\frac{(-1)^j \pi^{2k-2j-1}}{(2k-2j-1)!}T_{j}(t) \\ = \frac{(-1)^{k+1} 2^{2k}}{(2k)!}\frac{\partial^{2k}}{\partial v^{2k}}\left[v\sin\left(\frac{v\pi}{2}\right)S_{-1,v}(t)\right]_{v=0} - \frac{\pi^{2k+1}}{2(2n+1)!}J_0(t)
\end{multline*}

Thus, from equation \eqref{eq:C2.0} of Corollary 2 and Corollary 3 we see that

\begin{align*}
	4\int_{0}^{\infty}\big(\gamma+\log(t)- \mathrm{Ci}(t)\big)T_k(t)\frac{\mathrm{d}t}{t} &= 2\int_{0}^{\pi/2}\left[\zeta(2k+3)-\mathrm{Cl}_{2k+3}(2t)\right]\cot(t)\mathrm{d}t \\ &= 2\log(2)\zeta(2k+3) + \zeta(2k+4) \\ & \quad + \eta(2k+4) + \sum_{j=1}^{2k+3}\eta(2k+4-j)\eta(j) \\ & \quad - \sum_{j=1}^{2k+1}\zeta(2k+3-j)\zeta(j+1) 
\end{align*}

where $\mathrm{Cl}_{k}(t)$ is the Clausen function defined by 

\begin{equation*}
\mathrm{Cl}_{2k}(t) = \sum_{n=1}^{\infty}\frac{\sin(nt)}{n^{2k}}
\end{equation*}

\begin{equation*}
\mathrm{Cl}_{2k+1}(t) = \sum_{n=1}^{\infty}\frac{\cos(nt)}{n^{2k+1}}
\end{equation*}

As an example, Corollary 3 is equivalent to

\begin{equation*}
\int_{0}^{\pi/2}\left[\zeta(3)-\mathrm{Cl}_3(2t)\right]\cot(t)\mathrm{d}t = \frac{7}{4}\log(2)\zeta(3)
\end{equation*}

\end{remark}

\begin{corollary}
For $k \geq 1$,

\begin{multline}
4\int_{0}^{\infty}\big(\gamma+\log(t)- \mathrm{Ci}(t)\big)\left(\sum_{n=1}^{\infty}\frac{(-1)^{n}}{n^{2k-1}}J_{2n}(t)\right)\frac{\mathrm{d}t}{t} =  \zeta(2k+1) + \eta(2k+1) \\ - 2\eta(1)\eta(2k) + 2\sum_{j=1}^{k-1}\zeta(2k+1-2j)\eta(2j) - 2\sum_{j=1}^{k}\eta(2k+1-2j)\zeta(2j) \label{eq:prop4.0}
\end{multline}

\end{corollary}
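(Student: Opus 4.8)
The plan is to parallel the proof of Corollary 3, replacing the weight $1/n^{k}$ by the alternating weight $(-1)^{n}/n^{2k-1}$. First I would take the integral evaluation \eqref{eq:C2.1}, multiply both sides by $4(-1)^{n}/n^{2k-1}$, and sum over $n\ge 1$. Interchanging summation and integration on the left is legitimate exactly as before, since $\beta_n/(2n)\sim(\log n)/(2n)$ makes the terms $2(-1)^{n}\beta_n/n^{2k}$ absolutely summable for every $k\ge 1$, and this reduces the whole problem to the alternating Euler-type sum
\[
4\sum_{n=1}^{\infty}\frac{(-1)^{n}}{n^{2k-1}}\cdot\frac{\beta_n}{2n}=2\sum_{n=1}^{\infty}\frac{(-1)^{n}\beta_n}{n^{2k}}.
\]

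Next I would insert the unshifted coefficient $\beta_n=H_n+A_n-\tfrac{1}{2n}-\tfrac{(-1)^{n-1}}{2n}$ (here more convenient than the shifted form used in Corollary 3) and break the sum into four pieces. The two elementary pieces collapse at once: the $-\tfrac{1}{2n}$ term contributes $-\sum(-1)^{n}/n^{2k+1}=\eta(2k+1)$, and, because $(-1)^{n}(-1)^{n-1}=-1$, the $-\tfrac{(-1)^{n-1}}{2n}$ term contributes $\zeta(2k+1)$. These account for the leading $\zeta(2k+1)+\eta(2k+1)$ of \eqref{eq:prop4.0}.

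The substance is the pair of linear alternating Euler sums $U:=2\sum_{n\ge1}(-1)^{n}H_n/n^{2k}$ and $V:=2\sum_{n\ge1}(-1)^{n}A_n/n^{2k}$. For $U$ I would quote the classical alternating analogue of Euler's formula,
\[
\sum_{n=1}^{\infty}\frac{(-1)^{n-1}H_n}{n^{2k}}=\frac{2k+1}{2}\eta(2k+1)-\frac12\zeta(2k+1)-\sum_{j=1}^{k-1}\eta(2j)\zeta(2k+1-2j),
\]
whose $k=1$ case is the familiar $\tfrac58\zeta(3)$; for $V$ the corresponding evaluation of the alternating-harmonic-number sum,
\[
\sum_{n=1}^{\infty}\frac{(-1)^{n-1}A_n}{n^{2k}}=-\frac{2k+1}{2}\eta(2k+1)+\frac12\zeta(2k+1)+\eta(1)\eta(2k)+\sum_{j=1}^{k}\eta(2k+1-2j)\zeta(2j).
\]
Substituting both, the $(2k+1)\eta(2k+1)$ and $\zeta(2k+1)$ contributions of $U$ and $V$ cancel against one another, the double sum in $U$ supplies the middle term $+2\sum_{j=1}^{k-1}\zeta(2k+1-2j)\eta(2j)$, and the remaining pieces of $V$ supply $-2\eta(1)\eta(2k)-2\sum_{j=1}^{k}\eta(2k+1-2j)\zeta(2j)$; together with the $\zeta(2k+1)+\eta(2k+1)$ already isolated, this assembles precisely the right-hand side.

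I expect the main obstacle to be the sum $V$: unlike $U$, it involves the alternating harmonic number $A_n$ and is therefore a genuinely doubly-alternating double zeta value, so locating a citable closed form (or re-deriving it) is the delicate step, rather than the routine appeal to Euler's and Nielsen's formulas that sufficed in Corollary 3. A safe fallback is to exploit the identity $H_n+A_n=2\sum_{\substack{j\le n\\ j\ \mathrm{odd}}}1/j$, which collapses $U+V$ into a single odd-harmonic Euler sum; alternatively the formula for $V$ can be confirmed by Abel summation together with a numerical check against the $k=1$ value $\tfrac58\zeta(3)-\tfrac{\pi^{2}}{4}\log 2$.
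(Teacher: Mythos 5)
Your proposal is correct and is essentially the paper's own proof: the paper too multiplies \eqref{eq:C2.1} by $4(-1)^{n}/n^{2k-1}$, reduces everything to the sum $2\sum_{n\ge 1}(-1)^{n}\beta_n/n^{2k}$, and quotes Sitaramachandrarao's closed forms for the two alternating linear Euler sums (stated there in the shifted form with $H_{n-1}$, $A_{n-1}$, which is equivalent to your unshifted $U$ and $V$ after absorbing the elementary $\zeta(2k+1)$ and $\eta(2k+1)$ terms). The only slip is in your non-essential fallback remark: the $k=1$ value of $\sum_{n\ge 1}(-1)^{n-1}A_n/n^{2}$ is $-\tfrac{5}{8}\zeta(3)+\tfrac{\pi^{2}}{4}\log 2$, not its negative, as your own displayed formula for $V$ shows.
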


\begin{proof}

Proceeding in a similar manner as the previous proof, this time multiplying \eqref{eq:C2.1} by ${4(-1)^{n}}/{n^{2k-1}}$ and summing over $[1,\infty)$ we obtain
		
	\begin{equation*}
		4\sum_{n=1}^{\infty}\frac{(-1)^{n}}{n^{2k-1}}\int_{0}^{\infty}\left[\gamma+\log(t)- \mathrm{Ci}(t)\right]J_{2n}(t)\frac{\mathrm{d}t}{t} = 2\sum_{n=1}^{\infty}\frac{(-1)^{n}\beta_n}{n^{2k}}
	\end{equation*}
	
	The reason for restricting our multiplying by odd powers of $(-1)^{n-1}/n$ stems from the fact that---for the most part---closed forms in terms of eta and zeta functions exist only for alternating Euler sums of odd weight.  In our case, this translates to sums of the form
	
	\begin{equation*}
		\sum_{n=1}^{\infty}\frac{(-1)^{n}H_{n-1}}{n^{2k}} \quad \text{and} \quad \sum_{n=1}^{\infty}\frac{(-1)^{n}A_{n-1}}{n^{2k}} 
	\end{equation*}   

	In particular, we have the formulas originally due to Sitaramachandrarao \cite{rao}
	
	\begin{multline*}
		2\sum_{n=1}^{\infty}\frac{(-1)^{n}H_{n-1}}{n^{2k}} = \zeta(2k+1) - (2k-1)\eta(2k+1) \\ + 2\sum_{j=1}^{k-1}\zeta(2k+1-2j)\eta(2j) 
	\end{multline*}

	\begin{multline*}
		2\sum_{n=1}^{\infty}\frac{(-1)^{n}A_{n-1}}{n^{2k}} = \zeta(2k+1) + (2k+1)\eta(2k+1) - 2\eta(1)\eta(2k) \\ - 2\sum_{j=1}^{k}\eta(2k+1-2j)\zeta(2j) 
	\end{multline*}
	
	In addition we have, 
	
	\begin{equation*}
		2\sum_{n=1}^{\infty}\frac{(-1)^n\beta_n}{n^{2k}} = 2\sum_{n=1}^{\infty}\frac{(-1)^{n}H_{n-1}}{n^{2k}} + 2\sum_{n=1}^{\infty}\frac{(-1)^{n}A_{n-1}}{n^{2k}} - \zeta(2k+1) - \eta(2k+1) 
	\end{equation*}
	
	Therefore, plugging in Sitaramachandrarao's formulas we arrive at
	
	\begin{multline*}
		2\sum_{n=1}^{\infty}\frac{(-1)^{n}\beta_n}{n^{2k}} = \zeta(2k+1) + \eta(2k+1) - 2\eta(1)\eta(2k) \\  + 2\sum_{j=1}^{k-1}\zeta(2k+1-2j)\eta(2j) - 2\sum_{j=1}^{k}\eta(2k+1-2j)\zeta(2j)
	\end{multline*}
	
	which proves Corollary 4 after once again interchanging the order of summation and integration.
\end{proof}

\begin{example}

\begin{multline}
\int_{0}^{\infty}\left(\frac{\gamma+\log(t)-\mathrm{Ci}(t)}{t}\right)\left(\frac{\pi}{2}Y_0(t)-\log\frac{t}{2}J_0(t)\right)\mathrm{d}t \\ =
\frac{\pi^2}{4}\log(2) - \frac{7}{8}\zeta(3) \label{eq:C3.0}
\end{multline}

where $Y_v(t)$ is the Bessel function of the second kind.

\end{example}

\begin{proof}
Setting $k = 1$ in Corollary 4 gives

\begin{equation*}
4\int_{0}^{\infty}\big(\gamma+\log(t)- \mathrm{Ci}(t)\big)\left(\sum_{n=1}^{\infty}\frac{(-1)^{n}}{n}J_{2n}(t)\right)\frac{\mathrm{d}t}{t} = - \frac{\pi^2}{2}\log(2) + \frac{7}{4}\zeta(3)
\end{equation*}

Conveniently, the series on the LHS can be found in \cite{GR} as formula 8.515.7

\begin{equation*}
\sum_{n=1}^{\infty}\frac{(-1)^{n}}{2n}J_{2n}(t) = -\frac{\pi}{8}Y_0(t)+\frac{1}{4}\left(\log\frac{t}{2} + \gamma\right)J_0(t)
\end{equation*}

Thus we have the following integral

\begin{multline*}
\int_{0}^{\infty}\big(\gamma+\log(t)-\mathrm{Ci}(t)\big)\left(-\frac{\pi}{2}Y_0(t) + \left(\log\frac{t}{2} + \gamma\right)J_0(t)\right)\frac{\mathrm{d}t}{t} = \\ - \frac{\pi^2}{2}\log(2) + \frac{7}{4}\zeta(3)
\end{multline*}

Again splitting up the integrand and utilizing

\begin{equation*}
\int_{0}^{\infty}\left(\frac{\gamma+\log(t)-\mathrm{Ci}(t)}{t}\right)J_0(t)\mathrm{d}t =
0
\end{equation*}
 
we arrive at our desired conclusion

\begin{multline*}
\int_{0}^{\infty}\left(\frac{\gamma+\log(t)-\mathrm{Ci}(t)}{t}\right)\left(-\frac{\pi}{2}Y_0(t)+\log\frac{t}{2}J_0(t)\right)\mathrm{d}t =
-\frac{\pi^2}{4}\log(2) + \frac{7}{8}\zeta(3)
\end{multline*}

\end{proof}

The next result gives an integral representation of another Neumann series involving the coefficients $\beta_n$.

\begin{corollary}
For $a \in \mathbb{R}$, 
\begin{equation}
\sum_{n=1}^{\infty}\frac{(-1)^n}{n} J_{2n}(a)\beta_n = 2\int_{0}^{\infty}\left[\gamma+\log(t)-\mathrm{Ci}(t)\right]J_{0}\left(\sqrt{a^2 + t^2}\right)\frac{\mathrm{d}t}{t}
\end{equation}

\end{corollary}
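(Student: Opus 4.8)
The plan is to feed the integral evaluation of the Neumann coefficients from Corollary~2 into the target series, and then to collapse the resulting inner sum of Bessel products $\sum_n(-1)^nJ_{2n}(a)J_{2n}(t)$ into a single Bessel function of the combined argument $\sqrt{a^2+t^2}$ by means of an addition theorem. In this way the whole identity becomes a consequence of \eqref{eq:C2.1} together with a classical Bessel identity, and the only genuinely analytic point is the legitimacy of exchanging a sum and an integral.

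First I would start from \eqref{eq:C2.1}, namely $\int_0^\infty[\gamma+\log(t)-\mathrm{Ci}(t)]J_{2n}(t)\tfrac{\mathrm{d}t}{t}=\tfrac{\beta_n}{2n}$. Multiplying through by $2(-1)^nJ_{2n}(a)$ and summing over $n\ge1$ reproduces exactly the series $\sum_{n\ge1}\tfrac{(-1)^n}{n}\beta_nJ_{2n}(a)$ appearing on the left-hand side of the claim. Hence the statement is equivalent to pulling the $n$-summation inside the $t$-integral and evaluating in closed form the inner series $\sum_{n\ge1}(-1)^nJ_{2n}(a)J_{2n}(t)$.

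The tool for that inner series is Graf's (Neumann's) addition theorem,
\[
J_0\!\left(\sqrt{u^2+v^2-2uv\cos\alpha}\right)=J_0(u)J_0(v)+2\sum_{k=1}^{\infty}J_k(u)J_k(v)\cos(k\alpha).
\]
Specializing to the perpendicular configuration $\alpha=\tfrac{\pi}{2}$ is the decisive trick: then $\cos(k\alpha)$ vanishes for odd $k$ and equals $(-1)^n$ for $k=2n$, while the radical collapses to $\sqrt{a^2+t^2}$. This isolates precisely the even-order products carrying the alternating sign, so that $\sum_{n\ge1}(-1)^nJ_{2n}(a)J_{2n}(t)$ is expressed through $J_0(\sqrt{a^2+t^2})$ and a stray term proportional to $J_0(a)J_0(t)$. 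The latter contribution is annihilated by the vanishing integral $\int_0^\infty[\gamma+\log(t)-\mathrm{Ci}(t)]J_0(t)\tfrac{\mathrm{d}t}{t}=0$ already used in the Examples (it is the $n\to0$ limit of \eqref{eq:C2.0}), so that after inserting the coefficient from \eqref{eq:C2.1} one is left with a constant multiple of $\int_0^\infty[\gamma+\log(t)-\mathrm{Ci}(t)]J_0(\sqrt{a^2+t^2})\tfrac{\mathrm{d}t}{t}$, the right-hand side of the claim.

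The main obstacle I expect is justifying the interchange of summation and integration. The outer series converges very fast in $n$, since $J_{2n}(a)$ decays super-exponentially while $\beta_n$ grows only logarithmically; the difficulty lies entirely on the integral side. The weight $[\gamma+\log(t)-\mathrm{Ci}(t)]/t$ behaves like $(\log t)/t$ at infinity, so it is \emph{not} absolutely integrable against the crude bound $|J_{2n}(t)|\le1$, and a naive Fubini/Tonelli argument fails. The exchange must instead be controlled through the oscillation of the Bessel functions: I would pass to the partial sums $\sum_{n=1}^{N}$, for which each term-integral converges and equals $2(-1)^nJ_{2n}(a)\tfrac{\beta_n}{2n}$, and then show that the tail $\sum_{n>N}(-1)^nJ_{2n}(a)J_{2n}(t)$ makes a negligible contribution to the integral as $N\to\infty$, exploiting the uniform boundedness and decay of the addition-theorem remainder rather than absolute integrability. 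This convergence bookkeeping, not the algebra, is the delicate step.
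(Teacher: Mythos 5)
Your proposal is, in substance, the paper's own proof run in reverse. The paper starts from the identity
\begin{equation*}
J_{0}\left(\sqrt{a^2+t^2}\right) - J_0(a)J_0(t) \;=\; \sum_{n=1}^{\infty}(-1)^n J_{2n}(a)J_{2n}(t),
\end{equation*}
obtained by setting $\phi=\pi/2$ in Neumann's addition theorem, multiplies by $\left[\gamma+\log(t)-\mathrm{Ci}(t)\right]t^{-1}$, integrates, and applies \eqref{eq:C2.1} termwise together with the vanishing of the $J_0(a)J_0(t)$ contribution; you start from \eqref{eq:C2.1}, resum, and invoke the same addition theorem and the same vanishing integral. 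Same three ingredients, same argument.

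However, the one quantity you declined to pin down --- ``a constant multiple of'' the integral --- is exactly where your write-up and the paper part ways, and it matters. The addition theorem as you quote it carries the standard factor $2$ on the sum (as in Watson, and in the actual formula 8.531.1 of \cite{GR}), which gives
\begin{equation*}
\sum_{n=1}^{\infty}(-1)^n J_{2n}(a)J_{2n}(t) \;=\; \frac{1}{2}\left[J_{0}\left(\sqrt{a^2+t^2}\right) - J_0(a)J_0(t)\right],
\end{equation*}
so that completing your computation yields the corollary with coefficient $1$ on the right-hand side, not $2$. The paper arrives at the stated factor $2$ only because it transcribes the addition theorem \emph{without} the factor $2$; a check of the $O(a^2)$ terms (the $n=1$ term gives $-a^2/8$ on the left, while the right-hand side contributes $-c\,a^2/8$ with $c$ the disputed constant) confirms $c=1$. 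So your approach is the correct one, but carried to the end it proves a corrected version of the statement rather than the statement as printed --- you should finish the bookkeeping and flag the discrepancy rather than hide it behind ``a constant multiple.'' Separately, your final paragraph on interchanging $\sum$ and $\int$ addresses a genuine issue (the weight $\left[\gamma+\log(t)-\mathrm{Ci}(t)\right]/t$ is only conditionally integrable against each $J_{2n}$), which the paper passes over in silence; the partial-sum tail estimate you sketch is the right way to close that gap.
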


\begin{proof}
The proof of Corollary 5 begins with the identity

\begin{equation}
J_{0}\left(\sqrt{a^2 + t^2}\right) - J_0(a)J_0(t) = \sum_{n=1}^{\infty}(-1)^n J_{2n}(a)J_{2n}(t) \label{eq:C5.0}
\end{equation}

which can be obtained by setting $\phi = \pi/2$ in Nuemann's addition theorem (8.531.1 in \cite{GR})

\begin{equation*}
J_{0}\left(\sqrt{x^2 + y^2-2xy\cos(\phi)}\right) =  J_0(x)J_0(y) + \sum_{n=1}^{\infty}J_{n}(x)J_{n}(y)\cos(n\phi)
\end{equation*}

Multiplying \eqref{eq:C5.0} by $\left(\gamma+\log(t)-\mathrm{Ci}(t)\right)t^{-1}$ and integrating with respect to $t$ proves the result after once again using \eqref{eq:C2.1} of Corollary 3 and the fact that the second term on the LHS vanishes under integration.

\end{proof}

The final result gives a sine integral analogue of the examples of Corollaries 3 and 4 given above.

\begin{corollary}

\begin{equation*}
\int_{0}^{\infty}\mathrm{Si}(t)\left(\log\frac{t}{2}J_1(t)-\frac{\pi}{2}Y_1(t) - \frac{1}{t}J_0(t)\right)\frac{\mathrm{d}t}{t} = 4-4G-\gamma
\end{equation*}

where $G = \sum_{n=0}^{\infty}\frac{(-1)^n}{(2n+1)^2}\approx 0.9159655...$ is Catalan's constant.

\end{corollary}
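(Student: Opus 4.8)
The plan is to recognize the integrand as the closed form of a Neumann series in the odd-order Bessel functions $J_{2n+1}$, and then to integrate it term by term against $\mathrm{Si}(t)/t$ using Corollary 1, exactly as in the examples following Corollaries 3 and 4. First I would produce the Neumann expansion of the integrand. Starting from formula 8.515.7 of \cite{GR} and multiplying by $-2$ gives the even-order logarithmic series
\begin{equation*}
\sum_{n=1}^{\infty}\frac{(-1)^{n-1}}{n}J_{2n}(t) = \frac{\pi}{4}Y_0(t) - \frac{1}{2}\left(\log\frac{t}{2}+\gamma\right)J_0(t).
\end{equation*}
I would then differentiate both sides in $t$, applying $J_\nu'(t)=\tfrac{1}{2}\bigl(J_{\nu-1}(t)-J_{\nu+1}(t)\bigr)$ on the left and $Y_0'=-Y_1$, $J_0'=-J_1$ on the right. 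After reindexing, the left side becomes a Neumann series in the $J_{2n+1}$, yielding
\begin{equation*}
\left(\log\frac{t}{2}+\gamma\right)J_1(t) - \frac{\pi}{2}Y_1(t) - \frac{1}{t}J_0(t) = J_1(t) + \sum_{n=1}^{\infty}\frac{(-1)^n(2n+1)}{n(n+1)}J_{2n+1}(t).
\end{equation*}
Subtracting $\gamma J_1(t)$ then identifies the integrand $\log\tfrac{t}{2}J_1-\tfrac{\pi}{2}Y_1-\tfrac{1}{t}J_0$ as this explicit series minus $\gamma J_1$.

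Next I would multiply by $\mathrm{Si}(t)/t$ and integrate over $[0,\infty)$, interchanging sum and integral (justified as in Proposition 1 by the decay of the coefficients together with the boundedness of $\mathrm{Si}$). Invoking Corollary 1 in the form $\int_{0}^{\infty}\mathrm{Si}(t)J_{2n+1}(t)\,t^{-1}\mathrm{d}t=\alpha_n/(2n+1)$, together with the special value $\alpha_0=1$, the factors of $2n+1$ cancel and the problem reduces to a single scalar series:
\begin{equation*}
\int_{0}^{\infty}\mathrm{Si}(t)\left(\log\frac{t}{2}J_1(t)-\frac{\pi}{2}Y_1(t)-\frac{1}{t}J_0(t)\right)\frac{\mathrm{d}t}{t} = 1 - \gamma + \sum_{n=1}^{\infty}\frac{(-1)^n\alpha_n}{n(n+1)}.
\end{equation*}
Here I have also used $\int_{0}^{\infty}\mathrm{Si}(t)J_1(t)\,t^{-1}\mathrm{d}t=\alpha_0=1$ for the $-\gamma J_1$ contribution. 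It now remains to show that $\sum_{n\ge 1}(-1)^n\alpha_n/\bigl(n(n+1)\bigr)=3-4G$.

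The hard part is this final evaluation. I would split $\alpha_n$ into its limiting Leibniz part $2\sum_{k=1}^{n}(-1)^{k-1}/(2k-1)$ and its diagonal tail $(-1)^n/(2n+1)$. The diagonal piece contributes $\sum_{n\ge 1}1/\bigl(n(n+1)(2n+1)\bigr)$, which telescopes via the partial fraction $\tfrac{1}{n}+\tfrac{1}{n+1}-\tfrac{4}{2n+1}$ to the elementary value $3-4\log 2$. For the remaining double sum I would interchange the order of summation; the inner sum collapses to tails of the alternating harmonic series, leaving an alternating Euler-type sum of the form $\sum_{k\ge 1}(-1)^{k-1}A_k/(2k-1)$. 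Representing $A_k=\int_{0}^{1}\bigl(1-(-x)^k\bigr)/(1+x)\,\mathrm{d}x$ converts this into an elementary integral involving $\mathrm{arctanh}$ and $\arctan$ whose value furnishes the Catalan constant $G$ and simultaneously cancels the spurious $\log 2$ and $\pi\log 2$ terms. Assembling the pieces gives $3-4G$, and hence the stated value $4-4G-\gamma$. I expect the main obstacle to be controlling this alternating Euler sum and verifying that every $\log 2$ (and $\pi\log 2$) contribution cancels, so that only $G$ survives; a numerical check of the truncated series $\sum_{n\ge 1}(-1)^n\alpha_n/\bigl(n(n+1)\bigr)$ against $3-4G\approx-0.6639$ is a useful safeguard before committing to the closed-form evaluation.
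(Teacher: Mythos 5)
Your proposal is correct and follows the same skeleton as the paper's proof: expand the integrand in the Neumann series with coefficients $(-1)^n(2n+1)/\bigl(n(n+1)\bigr)$, integrate term by term against $\mathrm{Si}(t)/t$ using \eqref{eq:C1.1} together with $\alpha_0=1$, and reduce everything to the scalar evaluation $\sum_{n\ge1}(-1)^n\alpha_n/\bigl(n(n+1)\bigr)=3-4G$, after which splitting off $\int_0^\infty\mathrm{Si}(t)J_1(t)\,t^{-1}\mathrm{d}t=1$ converts $3-4G$ into $4-4G-\gamma$. The differences lie in how you supply two ingredients. First, the paper simply cites formula 8.514.9 of \cite{GR} for the Bessel identity, whereas you derive it by differentiating 8.515.7 term by term; your derivation is correct (and has the side benefit of independently confirming the ``$-1$'' term that the paper notes is misprinted in Hansen's table). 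Second, for the scalar sum the paper makes the same split of $\alpha_n$ and gets the same telescoped value $3-4\log 2$ for the diagonal piece, but then disposes of the double sum by invoking, without proof, the identity $\sum_{n\ge1}\frac{(-1)^n}{n}\sum_{k=1}^n\frac{(-1)^{k-1}}{2k-1}=-G$ twice plus unspecified ``series manipulations''; you instead interchange the order of summation, collapse the inner sum to tails of the alternating harmonic series, and evaluate the resulting sum $\sum_{k\ge1}(-1)^{k-1}A_k/(2k-1)$ via the integral representation of $A_k$, which indeed reduces to $\int_0^1\mathrm{arctanh}(u)/(1+u^2)\,\mathrm{d}u=G/2$ with the $\tfrac{\pi}{2}\log 2$ contributions cancelling exactly as you predict: the double-sum piece equals $2\log 2-2G$, so $2(2\log 2-2G)+(3-4\log 2)=3-4G$. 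Your route is thus more self-contained than the paper's (it does not treat the Catalan-constant Euler sum as a black box), at the cost of a longer computation; the two proofs are otherwise the same argument.
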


\begin{proof}
The proof of Corollary 2 begins by multiplying \eqref{eq:C1.1} by $(-1)^n\frac{2n+1}{n(n+1)}$ and summing over $[1,\infty)$.  This gives

\begin{multline}
\int_{0}^{\infty}\mathrm{Si}(t)\left(\left(\log\frac{t}{2} + \gamma -1 \right)J_1(t)-\frac{\pi}{2}Y_1(t) - \frac{1}{t}J_0(t)\right)\frac{\mathrm{d}t}{t} = \sum_{n=1}^{\infty}\frac{(-1)^{n}\alpha_n}{n(n+1)} \label{eq:C6.0}
\end{multline}

where we have interchanged the order of summation and integration on the LHS and used the identity

\begin{equation*}
\sum_{n=1}^{\infty}(-1)^{n}\frac{2n+1}{n(n+1)}J_{2n+1}(t) = \left(\log\frac{t}{2} + \gamma -1 \right)J_1(t)-\frac{\pi}{2}Y_1(t) - \frac{1}{t}J_0(t)
\end{equation*}

found as equation 8.514.9 in \cite{GR}.  We mention that the corresponding formula given in \cite{hansen} contains a typographical error, lacking the $-1$ in the parenthetical term on the RHS.

The coefficients of the series in \eqref{eq:C6.0} are expanded as

\begin{align*}
	\sum_{n=1}^{\infty}\frac{(-1)^{n}\alpha_n}{n(n+1)} &= 2\sum_{n=1}^{\infty}\frac{(-1)^{n}}{n(n+1)}\sum_{k=1}^{n}\frac{(-1)^{k-1}}{2k-1} + \sum_{n=1}^{\infty}\frac{1}{n(n+1)(2n+1)} 
\end{align*}

and after a number of series manipulations and two applications of the identity

\begin{equation*}
\sum_{n=1}^{\infty}\frac{(-1)^n}{n}\sum_{k=1}^{n}\frac{(-1)^{k-1}}{2k-1} = -G
\end{equation*}

we arrive at

\begin{equation*}
\int_{0}^{\infty}\mathrm{Si}(t)\left(\left(\log\frac{t}{2} + \gamma -1 \right)J_1(t)-\frac{\pi}{2}Y_1(t) - \frac{1}{t}J_0(t)\right)\frac{\mathrm{d}t}{t} = 3-4G
\end{equation*}

Remembering that Corollary 1 gives

\begin{equation*}
\int_{0}^{\infty}\mathrm{Si}(t)J_{1}(t)\frac{\mathrm{d}t}{t} = 1
\end{equation*}

we may split up the integral into two parts as we have done before to obtain

\begin{equation*}
\int_{0}^{\infty}\mathrm{Si}(t)\left(\log\frac{t}{2}J_1(t)-\frac{\pi}{2}Y_1(t) - \frac{1}{t}J_0(t)\right)\frac{\mathrm{d}t}{t} + \gamma - 1 = 3-4G
\end{equation*}

The proof is completed after subtracting $\gamma -1$ from both sides.
\end{proof}

MSC 2010: 41A58, 33B99, 33C10\\

University of Wisconsin-Madison; Madison, Wisconsin. [Student]\\

\textit{E-mail: sanfordchance@gmail.com}


\begin{thebibliography}{9}

\bibitem{abramowitz/stegun}
Abramowitz, M. and I. A. Stegun.  \textit{Handbook of Mathematical Functions with Formulas, Graphs, and Mathematical Tables} Dover (1964)

\bibitem{borwein}
Borwein, D., J. M. Borwein, and R. Girgensohn. "Explicit evaluation of Euler sums." \emph{Proceedings of the Edinburgh Mathematical Society} (Series 2) 38.02 pp. 277-294 (1995)

\bibitem{crandall}
Crandall, R. E., and J P. Buhler. "On the evaluation of Euler sums." \textit{Experimental Mathematics} 3.4, pp. 275-285 (1994)

\bibitem{gonzalez}
Gonzalez, Ivan, et al. "The moments of the hydrogen atom by the method of brackets." \textit{SIGMA. Symmetry, Integrability and Geometry: Methods and Applications 13} (2017)

\bibitem{GR}
Gradshteyn, I. S. and I. M. Ryzihk. 
\emph{Table of Integrals, Series, and Products 8th Ed.}
Academic Press (2015)

\bibitem{hansen}
Hansen, Eldon R. \emph{A Table of Series and Products} Prentice Hall Series in Automatic Computation, Englewood Cliffs: Prentice Hall (1975)

\bibitem{harris}
Harris, F. E. "Spherical Bessel Expansions of Sine, Cosine, and Exponential Integrals." \emph{Applied Numerical Mathematics} 34, pp. 95-98 (2000)

\bibitem{hoffman}
Hoffman, M. \textit{References for Multiple Zeta Values and Euler Sums} https://www.usna.edu/Users/math/meh/biblio.html

\bibitem{luke}
Luke, Y. L. \textit{The Special Functions and Their Approximations: Vol. II}
Academic Press (1969)

\bibitem{nielsen1}
Neilsen, N. 
\emph{Theorie Des Integrallogarithmus Und Verwandter Transzendenten} B. G. Teubner
(1906)

\bibitem{nielsen2}
Neilsen, N. 
\emph{Die Gammafunktion} Chelsea
(2005)

\bibitem{rao}
R. Sitaramachandrarao. A formula of S. Ramanujan, \textit{Journal of Number Theory}, 25.1, pp. 1-19  (1987)

\bibitem{watson}
Watson, G. N.
\textit{A Treatise on the Theory of Bessel Functions} 2nd Ed.
Watchmaker Publishing
(1915)


\bibitem{wilkins}
Wilkins, J. E. "Neumann series of Bessel functions." \textit{Transactions of the American Mathematical Society} 64.2, pp. 359-385 (1948)
\end{thebibliography}
\end{document}